\theoremstyle{plain} %% Comment this line if one style only
\newtheorem{theorem}{Theorem}[section]
\newtheorem{lemma}[theorem]{Lemma}
\newtheorem{proposition}[theorem]{Proposition}
\newtheorem{claim}[theorem]{Claim}
\theoremstyle{definition}
\newtheorem{definition}[theorem]{Definition}
\newtheorem{example}[theorem]{Example}
\newtheorem{remark}[theorem]{Remark}
\newtheorem{remarks}[theorem]{Remarks}
\numberwithin{equation}{section}
\numberwithin{figure}{section}
\numberwithin{table}{section}
\definecolor{purple}{RGB}{127,0,255}
\newcommand{\clb}{\color{blue}}
\newcommand{\clr}{\color{red}}
\newcommand{\clm}{\color{magenta}}
\definecolor{lgray}{gray}{0.90}
\newcommand{\noid}{\noindent $\diamond$~}
\newcommand{\K}{\mathbb{K}}
\newcommand{\R}{\mathbb{R}}
\newcommand{\bS}{\mathbb{S}}
\newcommand{\cD}{\mathcal{D}}
\newcommand{\cS}{\mathcal{S}}
\newcommand{\cZ}{\mathcal{Z}}
\newcommand{\mf}{\mathfrak}
\newcommand{\sm}{\!\setminus\!}
\newcommand{\set}[1]{\left\lbrace #1 \right\rbrace}
\newcommand{\pib}{\frac{\pi}{2}}
\newcommand{\pic}{\frac{\pi}{3}}
\newcommand{\pid}{\frac{\pi}{4}}
\newcommand{\pf}{\emph{Proof.~}}
\newcommand{\rP}{\mathbb{P}^2}% Real projective plane
\newcommand{\gammat}{\widetilde{\gamma}}
\newcommand{\Gammat}{\widetilde{\Gamma}}
\newcommand{\bD}{\partial \cD}
\title[Euler-type formula M\"{o}bius]{An Euler-type formula for partitions\\ of the M\"{o}bius strip}
\date{\today}
\author[P. B\'{e}rard]{Pierre B\'erard}
\author[B. Helffer]{Bernard Helffer}
\address{PB: Universit\'{e} Grenoble Alpes and CNRS\\
Institut Fourier, CS 40700\\ 38058 Grenoble cedex 9, France.}
\email{pierrehberard@gmail.com}
\address{BH: Laboratoire Jean Leray, Universit\'{e} de Nantes and CNRS\\
F44322 Nantes Cedex, France and LMO (Universit\'e Paris-Sud).}
\email{Bernard.Helffer@univ-nantes.fr}
\keywords{Spectral theory,  M\"{o}bius strip, Laplacian, Partitions.}
\subjclass[2010]{58C40, 49Q10.}
\date{\today~(\currfilename)}
\begin{document}

\begin{abstract}
The purpose of this note is to prove an Euler-type formula for partitions of the M\"{o}bius strip. This formula was introduced in our joint paper with R.~Kiwan, ``Courant-sharp property for Dirichlet eigenfunctions  on the M\"{o}bius strip'' (arXiv:2005.01175).
\end{abstract}

\maketitle

\vspace{2cm}

\section{Introduction}

In \cite{BHK}, in collaboration with R.~Kiwan, we investigated the Courant-sharp property for the eigenva\-lues of the Dirichlet Laplacian on the square M\"{o}bius strip, equipped with  the flat metric. We pointed out that the orientability of the nodal domains (more precisely the fact that they are all orientable or not) can be detected by an Euler-type formula. The purpose of the present note is to establish this formula in the framework of partitions.\medskip

In Section~\ref{S-parti}, we fix the notation and recall (or modify) some of the definitions for partitions given in \cite{BH0}, and we state the Euler-type formula we are interested in, see Theorem~\ref{T-euler-nor}. The proof of the theorem is given in Section~\ref{S-pfnor}.

\section{Partitions}\label{S-parti}

\subsection{Definitions and notation}

Let $\Sigma$ denote a compact Riemannian surface with or without boundary. We consider Euler-type formulas in the general framework of partitions. We first recall (or modify) some of the definitions given in \cite{BH0}.\medskip

A \emph{$k$-partition} of $\Sigma$ is a collection,  $\cD = \{D_j\}_{j=1}^{k}$, of $k$ pairwise disjoint, connected, open subsets of $\Sigma$. We furthermore assume that the $D_j$'s are piecewise $C^1$, and that
% \footnote{Partitions satisfying \eqref{E-parti-str} are called
%\emph{strong} in \cite{BH0}.}
%
\begin{equation}\label{E-parti-str}
\mathrm{Int}(\overline{\cup_j D_j} ) =  \Sigma\,.
\end{equation}

The \emph{boundary set} $\partial \mathcal{D}$ of a partition
$\mathcal{D} = \{D_j\}_{j=1}^{k} \in \mathfrak{D}_k(\Sigma)$ is the
closed set,
\begin{equation}
\partial \mathcal{D} = \overline{\cup_j(\partial D_j \cap \Sigma)}\,.
\end{equation}

\begin{definition}\label{D-parti-ess}
A partition $\cD = \set{D_j}_{j=1}^k$ is called \emph{essential} if, for all $j, \,1 \le j \le k$,
\begin{equation}\label{E-parti-ess}
\mathrm{Int}(\overline{D_j} ) = D_j\,.
\end{equation}
\end{definition}%

\begin{definition}\label{D-parti-reg}
A \emph{regular $k$-partition} is a $k$-partition
whose boundary set $\bD$ satisfies the following properties:
\begin{itemize}
    \item[(i)] The boundary set $\bD$ is locally a piecewise $C^1$  immersed curve in $\Sigma$,
    except possibly at finitely many points $\{y_i \in \bD \cap
    \Sigma\}$ in a neighborhood of which $\bD$ is the union of
    $\nu(y_i)$ $C^1$ semi-arcs meeting at $y_i, \nu(y_i) \ge 3$.
    \item[(ii)] The set $\bD \cap \partial \Sigma$ consists of finitely
    many points $\{z_j\}$. Near the point $z_j$, the set $\bD$ is the
    union of $\rho(z_j)\ge 1$ $C^1$ semi-arcs hitting $\partial \Sigma$ at $z_j$.
    \item[(iii)] The boundary set $\bD$ has the following \emph{transversality property}:
    at any \emph{interior singular point} $y_i$, the semi-arcs meet transversally; at any \emph{boundary singular point} $z_j$, the semi-arcs meet transversally, and they meet  the boundary $\partial \Sigma$ transversally.
\end{itemize}
\end{definition}%

The subset of regular $k$-partitions is denoted by $\mathcal{R}_k(\Sigma) \subset \mf{D}_k(\Sigma)$. When $\cD$ is a regular partition, we denote by  $\cS(\cD)$ the set of singular points of $\bD$.\medskip
%$\cS(\cD) = \cS_i(\cD) \cup \cS_b(\cD)$

\begin{definition}\label{D-parti-nor}
A regular $k$-partition $\cD = \set{D_j}_{j=1}^k$ is called \emph{normal}, if it satisfies the additional condition, for all $j, ~1 \le j \le k$,
\begin{equation}\label{E-parti-nor}
\forall x \in \partial D_j, ~\exists r >0 \text{~~s. t.~~} B(x,r)\cap D_j \text{~is~} connected.
\end{equation}
\end{definition}%

\begin{remark}\label{R-parti-nor}
The definition of a normal partition implies that each domain in the partition is a topological manifold with boundary (actually a piecewise $C^1$ surface with boundary, possibly with corners).  A normal partition is essential.
\end{remark}%

\begin{example}\label{E-parti-nod}
The partition of a compact surface $\Sigma$ (with or without boundary) associated with an eigenfunction is called a \emph{nodal partition}. The domains of the partition are the nodal domains of $\Phi$, the boundary set $\bD$ is the nodal set $\cZ(\Phi)$, the singular set $\cS(\cD)$ is the set of critical zeros of $\Phi$. This is an example of an essential, regular partition. Nodal partitions are not necessarily normal due to the singular set, see Figure~\ref{F-em-1} (middle) and Figure~\ref{F-em-3}\,(A).
\end{example}%

For a partition $\cD \in \mf{D}(\Sigma)$, we introduce the following numbers.
\begin{itemize}
  \item[(a)] $\kappa(\cD,\Sigma)$ denotes the number of domains of the partition;
  \item[(b)] $\beta(\cD,\Sigma)$ is defined as $b_0(\bD \cup \partial \Sigma) - b_0(\partial \Sigma)$, the difference between the number of connected components of $\bD \cup \partial \Sigma$, and the number of connected components of $\partial \Sigma$;
  \item[(c)] $\omega(\cD,\Sigma)$ is the \emph{orientability character} of the partition,
\begin{equation*}
\left\{
\begin{array}{ll}
\omega(\cD,\Sigma) = 0, & \text{if all the domains of the partition are orientable};\\[5pt]
\omega(\cD,\Sigma) = 1, & \text{if at least one domain of the partition is non-orientable.}
\end{array}%
\right.
\end{equation*}

\end{itemize}

Obviously, $\omega(\cD,\Sigma) = 0$ whenever the surface $\Sigma$ is orientable.\medskip

\begin{remarks}\label{R-parti-or}~
\begin{enumerate}
  \item[(i)] We use the definition of orientability given in \cite[Chap.~5.3]{BeGo1988} (via differential forms of degree $2$), or the similar form given in \cite[Chap.~4.5]{GaXu} in the setting of topological manifolds (via the degree).  A topological surface is orientable if one can choose an atlas whose changes of chart are homeomorphisms with degree $1$.
  \item[(ii)] A compact surface (with boundary) is non-orientable if and only if it contains the homeomorphic image of a M\"{o}bius strip. One direction is clear since the M\"{o}bius strip is not orientable. For the other direction, one can use the classification of compact surfaces (with boundary).
\end{enumerate}
\end{remarks}%

For a regular partition $\cD$, we define the \emph{index} of a point $x
\in \mathcal{S}(\mathcal{D})$ to be,
\begin{equation}
\iota (x) = \left\{
\begin{array}{ll}
\nu(x) - 2\,, & \text{~if~} x \text{ ~is an interior singular point},\\
\rho(x)\,,  & \text{~if~} x \text{ ~is a boundary singular point}.
\end{array}
\right.
\end{equation}
For a regular partition, define the number $\sigma(\mathcal{D},\Sigma)$ to be,
\begin{equation}
\sigma(\mathcal{D},\Sigma) = \frac{1}{2}\, \sum_{x \in \mathcal{S}(\mathcal{D})}
\iota(x)\,.
\end{equation}

Finally, we introduce the number
\begin{equation}\label{E-parti-del}
\delta(\cD,\Sigma) = \omega(\cD,\Sigma) + \beta(\cD,\Sigma) + \sigma(\cD,\Sigma) - \kappa(\cD,\Sigma)\,.
\end{equation}

\begin{lemma}[Normalization]\label{L-parti-norm}
Let $\mathcal D$ be an  essential, regular partition of $\Sigma$. Then, one can construct a normal partition $\widetilde{\mathcal D}$ of $\Sigma$ such that $\delta (\widetilde{\mathcal D},\Sigma) = \delta (\mathcal D,\Sigma)$, and $\omega(\widetilde{\cD},\Sigma) = \omega(\cD,\Sigma)$.
\end{lemma}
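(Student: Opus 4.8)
The plan is to localize the failure of normality at the singular points and to remove it by a single type of local surgery---\emph{inflating} each offending singular point into a small new domain---arranged so that its contributions to $\kappa$, $\beta$, $\sigma$ and $\omega$ cancel in $\delta$.

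First I would locate the obstruction. For $x$ on a regular arc of $\bD$ (or on $\partial\Sigma$ away from $\cS(\cD)$), the set $B(x,r)\cap D_j$ is a half-disk, hence connected, so \eqref{E-parti-nor} can fail only at a singular point. There $B(x,r)\cap D_j$ is the union of the sectors carrying $D_j$; since consecutive sectors are separated by an arc, the condition fails precisely when some domain occupies at least two sectors. At this step essentiality is decisive: were two adjacent sectors to carry the same domain $D$, then $D$ would lie on both sides of the arc between them, forcing $\mathrm{Int}(\overline{D})\supsetneq D$ against \eqref{E-parti-ess}; hence adjacent sectors always carry distinct domains, and this is exactly what will make the surgery produce normal vertices.

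For the surgery, at each singular point where normality fails I would excise a small disk (a half-disk at a boundary point $z$), insert a new domain $D_{\mathrm{new}}$ filling it, and reattach the $\nu(y)$ (resp.\ $\rho(z)$) incoming arcs transversally to the newly created boundary of $D_{\mathrm{new}}$ at distinct points. Taking these disks pairwise disjoint and disjoint from the remaining singular points, $\widetilde{\cD}$ is again regular. Every new vertex carries $D_{\mathrm{new}}$ together with the two original sectors flanking a reattached arc; these two are distinct and both differ from $D_{\mathrm{new}}$, so all sectors at each new vertex are distinct domains, while each formerly pinched domain now meets $D_{\mathrm{new}}$ along disjoint edges and stays connected (its lobes were already joined by a path avoiding the singular point). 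Thus $\widetilde{\cD}$ is normal, and essential by Remark~\ref{R-parti-nor}.

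It remains to check invariance, one inflation at a time. Inflation creates exactly one domain, so $\Delta\kappa=+1$. It replaces one vertex of index $\nu-2$ by $\nu$ trivalent vertices of index $1$ in the interior case, and one vertex of index $\rho$ by $\rho$ trivalent interior vertices of index $1$ plus two boundary vertices of index $1$ in the boundary case; in both cases $\sum_x\iota(x)$ grows by $2$, so $\Delta\sigma=+1$. The inserted cycle keeps the incoming arcs mutually connected exactly as the old vertex did---collapsing it recovers the previous local picture---so it neither merges nor separates components of $\bD\cup\partial\Sigma$, and $\Delta\beta=0$. Finally $D_{\mathrm{new}}$ is a disk and each original domain is only trimmed near a point, so no orientability type changes and $\Delta\omega=0$. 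Hence $\Delta\delta=\Delta\omega+\Delta\beta+\Delta\sigma-\Delta\kappa=0$, and summing over the finitely many singular points gives $\delta(\widetilde{\cD},\Sigma)=\delta(\cD,\Sigma)$ and $\omega(\widetilde{\cD},\Sigma)=\omega(\cD,\Sigma)$. I expect the delicate points to be the verification that the construction is genuinely normal---which hinges on the essentiality argument above---and the global claim $\Delta\beta=0$, namely that inserting the cycle $\partial D_{\mathrm{new}}$ changes only $b_1$, not $b_0$, of $\bD\cup\partial\Sigma$.
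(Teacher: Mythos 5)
Your proposal is correct and takes essentially the same route as the paper: the proof of Lemma~\ref{L-parti-norm} performs exactly this inflation, replacing each offending singular point $x$ by the extra domain $B(x,\varepsilon)\cap\Sigma$ and the trimmed domains $D_j\setminus\overline{B(x,\varepsilon)}$, with the same bookkeeping $\Delta\kappa=\Delta\sigma=1$ and $\Delta\beta=\Delta\omega=0$. If anything, your write-up is more explicit than the paper's on the boundary-point case, on why the new vertices satisfy \eqref{E-parti-nor} (via essentiality), and on why $\Delta\beta=0$, points the paper treats summarily.
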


\begin{proof}
Using condition \eqref{E-parti-ess}, we see that an  essential, regular partition $\cD = \set{D_j}_{j=1}^k$ is normal except possibly at points in $\cS(\cD)$, with index $\iota > 1$, and for which there exists some domain $D_j$ such that $B(x,\varepsilon) \cap D_j$ has at least two connected components. Here, $B(x,\varepsilon)$ denotes the disk with center $x$ and radius $\varepsilon$ in $\Sigma$. Let $x$ be such a point. For $\varepsilon$ small enough, introduce the partition $\cD_x$ whose elements are the $D_j\sm \overline{B(x,\epsilon)}$, and the extra domain $B(x,\varepsilon)\cap \Sigma$. In this procedure, we have $\kappa(\cD_x) = \kappa(\cD)+1$; an interior singular point $x$, with $\nu(x) \ge 4$ is replaced by $\nu(x)$ singular points of index $3$ for which condition \eqref{E-parti-nor} is satisfied. Hence, $\sigma (\cD_x)= \sigma (\mathcal D) +1$ and $\delta (\cD_x)= \delta (\mathcal D)$. A similar procedure is applied at a boundary singular point. By recursion, we can in this way eliminate all the singular points at which condition \eqref{E-parti-nor} is not satisfied. Choosing $\varepsilon$ small enough, this procedure does not change the orientability of the modified domains, while the added disks are orientable.
\end{proof}

\begin{remark}\label{R-parti-norm}
By the same process, one could also remove all singular points with index $\iota > 1$. We do not need to do that for our purposes.
\end{remark}%

\subsection{Partitions and Euler-type formulas}

In the case of partitions of the sphere $\bS^2$, or of a planar domain $\Omega$, we have the following Euler-type formula, which appears in \cite{Ley1993,Ley1996} (sphere) or \cite{HOMiNa} (planar domain).

% see also \cite{BH0}, and the Gauss-Bonnet formula which gives,
% $\chi(\overline{\Sigma}) + \sigma(\cD,\Sigma) = \sum_{j=1}^k \chi(D_j)$.

\begin{proposition}\label{P-euler-or}
Let $\Sigma$ be $\bS^2$, or a bounded open set $\Omega$ in $\mathbb R^2$, with piecewise $C^{1}$ boundary, and let $\mathcal D$ be a regular partition with $\partial \mathcal D$  the boundary set. Then,
\begin{equation}\label{E-euler-or}
\kappa(\cD,\Sigma) = 1 + \beta(\cD,\Sigma) + \sigma(\cD,\Sigma)\,.
\end{equation}
\end{proposition}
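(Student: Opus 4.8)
The plan is to read the boundary set together with the boundary of the surface as a finite graph $G = \bD \cup \partial\Sigma$ embedded in the sphere, and to apply the Euler formula for a \emph{possibly disconnected} embedded graph, namely $V - E + F = 1 + c$, where $V,E,F$ are the numbers of vertices, edges and faces (connected components of the complement of $G$), and $c = b_0(G)$ is the number of connected components of $G$. When $\Sigma = \bS^2$ the graph $G = \bD$ already lives in $\bS^2$; when $\Sigma = \overline{\Omega}$ I would first embed $\overline{\Omega}$ into $\bS^2$ by adding the point at infinity, so that in both cases the formula applies verbatim. The whole proof then amounts to expressing $V-E$, $F$ and $c$ in terms of $\kappa$, $\beta$ and $\sigma$, setting $p := b_0(\partial\Sigma)$ ($p=0$ in the spherical case).

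To turn $G$ into a genuine finite graph I would take as vertices the singular points $\cS(\cD)$, and add one auxiliary vertex of valence $2$ on each connected component of $G$ that carries no singular point (a smooth closed curve of $\bD$, or a boundary circle of $\partial\Sigma$ met by no arc of $\bD$). Such a vertex has index $\iota = \nu - 2 = 0$, so it affects neither $\sigma$ nor $c$ nor $V-E$. By regularity (Definition~\ref{D-parti-reg}) the resulting $G$ is a finite graph with transversal vertices, whose graph-valence is $\nu(y)$ at an interior singular point $y$, equal to $\rho(z)+2$ at a boundary singular point $z$ (the $\rho(z)$ semi-arcs of $\bD$ plus the two arcs of $\partial\Sigma$ abutting $z$), and $2$ at an auxiliary vertex. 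The handshake identity $2E = \sum_v \deg(v)$ then gives
\begin{equation*}
2(V-E) \;=\; -\sum_{y}\bigl(\nu(y)-2\bigr)\;-\;\sum_{z}\rho(z) \;=\; -\sum_{x \in \cS(\cD)} \iota(x) \;=\; -2\,\sigma(\cD,\Sigma),
\end{equation*}
where $y$ runs over interior and $z$ over boundary singular points; the auxiliary and the two extra $\partial\Sigma$-contributions at each $z$ cancel exactly as designed, so that $V - E = -\sigma(\cD,\Sigma)$.

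Next I would identify the faces. The components of $\bS^2 \setminus G$ lying inside $\Sigma$ are precisely the domains $D_j$: each $D_j$ is open, connected and disjoint from $G$, and by \eqref{E-parti-str} the $D_j$ exhaust $\Sigma \setminus \bD$, so the interior faces are in bijection with the domains and contribute $\kappa(\cD,\Sigma)$. For $\Sigma = \bS^2$ there are no further faces, while for $\Sigma = \overline{\Omega}$ the remaining faces are the components of $\bS^2 \setminus \overline{\Omega}$, which for a connected planar domain with $p$ boundary circles are exactly $p$ topological disks; hence $F = \kappa(\cD,\Sigma) + p$ in all cases. Finally $c = b_0(\bD \cup \partial\Sigma) = \beta(\cD,\Sigma) + b_0(\partial\Sigma) = \beta(\cD,\Sigma) + p$. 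Substituting $V-E=-\sigma$, $F=\kappa+p$ and $c=\beta+p$ into $V - E + F = 1 + c$ yields $-\sigma + \kappa + p = 1 + \beta + p$, that is $\kappa = 1 + \beta + \sigma$, as claimed.

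The main obstacle I anticipate is the face count in the planar case: justifying both the bijection between interior faces of $G$ and the domains $D_j$ (which rests on the covering property \eqref{E-parti-str}), and the statement that $\bS^2 \setminus \overline{\Omega}$ has exactly $p$ components. The latter uses the connectedness of $\Omega$ and can be verified through the inclusion--exclusion $\chi(\bS^2) = \chi(\overline{\Omega}) + \chi(\overline{\bS^2\setminus\Omega})$ together with $\chi(\partial\Omega)=0$ and the planarity of each complementary region. The other delicate point is the bookkeeping with the auxiliary valence-$2$ vertices and with the correction $\rho(z)+2$ at boundary singular points, but this is arranged precisely so that the boundary contributions collapse to $\sum_x \iota(x)$ and one obtains $V - E = -\sigma$.
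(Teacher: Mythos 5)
Your proposal is correct, but note that the paper itself gives no proof of Proposition~\ref{P-euler-or}: the statement is simply quoted from the literature (\cite{Ley1993,Ley1996} for the sphere, \cite{HOMiNa} for planar domains), so any complete argument is "a different route" from the paper's. Your derivation is sound and is essentially the classical graph-theoretic argument behind those references: (a) the generalized Euler formula $V-E+F=1+b_0(G)$ for a finite, possibly disconnected graph embedded in $\bS^2$; (b) the handshake bookkeeping, where auxiliary valence-$2$ vertices contribute $2-2=0$ and a boundary singular point $z$ contributes $2-(\rho(z)+2)=-\rho(z)$, so that $V-E=-\sigma(\cD,\Sigma)$ exactly matches the definition of $\iota$; and (c) the face count $F=\kappa(\cD,\Sigma)+b_0(\partial\Sigma)$, where the bijection between interior faces and domains follows because \eqref{E-parti-str}, with finitely many domains, forces $\Sigma\setminus\bD=\cup_j D_j$ (each face, being open, connected, and a union of the disjoint open sets $D_j$, is a single $D_j$), and the Euler-characteristic argument you sketch does force $\bS^2\setminus\overline{\Omega}$ to consist of exactly $b_0(\partial\Omega)$ disks. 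Two points should be made explicit rather than tacit: the formula and your proof require $\Omega$ (resp.\ $\bS^2$) to be connected -- two disjoint disks violate \eqref{E-euler-or}, so connectedness is an implicit hypothesis of the proposition, and it is precisely what your complement-count and the constant $1$ in $V-E+F=1+c$ use; and it is regularity (Definition~\ref{D-parti-reg}) together with compactness of $\bD\cup\partial\Sigma$ that guarantees $G$ is a genuine finite graph with finitely many arcs, which your handshake step needs. What your approach buys is a self-contained proof, making transparent exactly which structural hypotheses (connectedness, regularity, and \eqref{E-parti-str}) the Euler-type identity rests on; what the paper's choice buys is brevity, since the result is standard.
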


\begin{rem}%% version 
{\clm
\begin{remark}
The formula seems to work in case of cracks. Take for  example the disk
and a  $1$-partition consisting of the disk minus a radius.
In the reference, it is assumed that $\nu (x) \geq 3$ which is natural
when considering minimal partitions.
\end{remark}

This formula has been  applied, together with other arguments, to determine upper bounds for the number of singular points of minimal partitions.
}
\end{rem}%% version 

\begin{theorem}\label{T-euler-nor}
Let $\cD$ be an  essential, regular partition of the M\"{o}bius strip $M_1$.
With the previous notation, we have,
\begin{equation}\label{E-euler-nor}
\kappa(\cD,M_1) = \omega(\cD,M_1) + \beta(\cD,M_1) + \sigma(\cD,M_1)\,.
\end{equation}
\end{theorem}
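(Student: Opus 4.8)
The plan is to pull the partition back to the orientation double cover of $M_1$, where it becomes a partition of a planar annulus, apply the planar Euler-type formula of Proposition~\ref{P-euler-or} there, and then descend by tracking how each of $\kappa,\beta,\sigma,\omega$ transforms under the covering. By Lemma~\ref{L-parti-norm} it is enough to establish \eqref{E-euler-nor} for a normal partition, so I would first assume $\cD$ normal.

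Let $\pi:\wt M\to M_1$ be the orientation double cover; then $\wt M$ is orientable, hence a cylinder, which I regard as a planar domain. The pulled-back partition $\wt\cD=\pi^{-1}(\cD)$ is again essential and regular, with boundary set $\wt G=\pi^{-1}(G)$, where $G=\bD\cup\partial M_1$. Since $\partial M_1$ is orientation-preserving (it is homologous to twice the core), its preimage $\partial\wt M$ is made of two circles, so $b_0(\partial\wt M)=2$. As $\pi$ is a local diffeomorphism, every singular point lifts to two points of the same type and index, giving $\sigma(\wt\cD,\wt M)=2\,\sigma(\cD,M_1)$. A connected domain $D_j$ has connected preimage exactly when it carries an orientation-reversing loop, i.e.\ is non-orientable, and otherwise splits into two lifts; writing $m$ for the number of non-orientable domains, this yields $\kappa(\wt\cD,\wt M)=2\kappa(\cD,M_1)-m$.

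The decisive tool will be the mod~$2$ intersection form on $H_1(M_1;\Z/2)\cong\Z/2$, whose generator is the one-sided core class $[c]$ with $[c]\cdot[c]=1$. Every orientation-reversing loop represents $[c]$, so two \emph{disjoint} orientation-reversing loops are impossible (their intersection number would be $1$). This immediately gives $m\le1$, hence $m=\omega(\cD,M_1)$ and $\kappa(\wt\cD,\wt M)=2\kappa(\cD,M_1)-\omega(\cD,M_1)$. The same mechanism handles $\beta$. Let $n_1$ count the components of $G$ carrying an orientation-reversing loop; these are precisely the components over which $\pi$ is nontrivial, so $b_0(\wt G)=2\,b_0(G)-n_1$ and $\beta(\wt\cD,\wt M)=b_0(\wt G)-2=2\,\beta(\cD,M_1)-n_1$. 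If $\omega=1$ an orientation-reversing loop sits in a domain interior, disjoint from $G$, so the intersection obstruction forbids one in $G$ and $n_1=0$. If $\omega=0$ the same disjointness bound gives $n_1\le1$, while $n_1\ge1$ because otherwise the orientation character would vanish on every cycle of the $1$-skeleton $G$, hence on all of $H_1(M_1;\Z/2)$, contradicting non-orientability. Thus $n_1=1-\omega(\cD,M_1)$ and $\beta(\wt\cD,\wt M)=2\,\beta(\cD,M_1)+\omega(\cD,M_1)-1$.

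Applying Proposition~\ref{P-euler-or} to $\wt\cD$ on the annulus and inserting the three transfer identities then yields
\begin{equation*}
2\kappa-\omega=1+\bigl(2\beta+\omega-1\bigr)+2\sigma,
\end{equation*}
which collapses to $\kappa=\omega+\beta+\sigma$. I expect the main obstacle to be the computation of $n_1$, that is, the claim $n_1=1-\omega$: this is exactly the point where one must show that the non-orientability of $M_1$ is localized either inside a single domain or along the boundary set but never both, and it is the self-intersection identity $[c]\cdot[c]=1$ that makes the dichotomy rigid.
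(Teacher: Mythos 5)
Your covering-space strategy is genuinely different from the paper's proof: the paper normalizes the partition (Lemma~\ref{L-parti-norm}), cuts each orientable domain into simply connected pieces, splits off the unique non-orientable domain along an arc, and finally scissors $M_1$ along a transversal curve so as to land on a simply connected planar domain where Proposition~\ref{P-euler-or} applies. You instead lift everything to the orientation double cover (the annulus) and apply Proposition~\ref{P-euler-or} upstairs. Most of your transfer identities are correct and well justified: $\sigma^*=2\sigma$ because $\pi$ is a local diffeomorphism; $\kappa^*=2\kappa-m$; the bound $m\le 1$ via $[c]\cdot[c]=1$ (a cleaner replacement for the paper's argument combining Lemma~\ref{L-pfnor-8} with the classification of surfaces); $b_0(\wt{G})=2b_0(G)-n_1$; and $n_1=0$ when $\omega=1$.

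The gap is exactly where you feared, namely the claim $n_1\ge 1$ when $\omega=0$. Your justification is that otherwise the orientation character would vanish on every cycle of $G$, \emph{hence} on all of $H_1(M_1;\Z/2)$. That ``hence'' is unjustified: $H_1(M_1;\Z/2)$ is \emph{not} generated by cycles lying in $G$ together with cycles lying in single domains. In the Mayer--Vietoris sequence for $U$ a neighborhood of $G$ and $V$ the union of the domains, there is an extra contribution from $H_0(U\cap V)$: classes represented by ``nerve'' loops that travel through several domains, crossing $G$, and a priori the non-orientability of $M_1$ could be carried entirely by such a loop while every individual piece is orientation-trivial. This is a real phenomenon, not a bookkeeping issue: the Klein bottle is obtained by gluing an orientable annulus to itself along two-sided (character-trivial) circles, so vanishing of the character on all pieces of a decomposition does not imply orientability of the union; any correct proof must use something special about $M_1$ or about partitions. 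Fortunately the repair stays within your framework. Each edge of $\partial\cD$ has two endpoints, so $\sum_{x}\iota(x)$ is even by the handshake lemma and $\sigma\in\Z$; substituting $\kappa^*=2\kappa$, $\sigma^*=2\sigma$, $\beta^*=2\beta-n_1$ into Proposition~\ref{P-euler-or} gives $n_1=1+2\bigl(\beta+\sigma-\kappa\bigr)$, so $n_1$ is odd, and combined with your own disjointness bound $n_1\le 1$ this forces $n_1=1$. With that step replaced, your proof is complete and constitutes a genuine alternative to the paper's cut-and-unfold argument; it is worth noting that the paper's source contains an excluded draft section attempting precisely this double-cover route, whose corresponding lemma ($\beta^*=2\beta$ in the non-orientable case) is left with only a ``tentative proof'', which suggests the authors themselves found this the delicate point.
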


\begin{rem}%% version 
{\clm One could dream, and try to prove,
\begin{theorem}\label{T-euler-nor-bis}
Let $\cD$ be a regular partition of the surface $\Sigma$, where $\Sigma$ is the real projective plane $\rP$, the Klein bottle $\K^2$, or of the M\"{o}bius strip $M_1$. With the previous notation, we have,
\begin{equation}\label{E-euler-nor-bis}
\kappa(\cD,\Sigma) = \omega(\cD,\Sigma) + \beta(\cD,\Sigma) + \sigma(\cD,\Sigma)\,.
\end{equation}
\end{theorem}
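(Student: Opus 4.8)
The plan is first to reduce to normal partitions and to an Euler-characteristic identity, then to isolate the single homological fact on which everything hinges. Assuming $\cD$ essential (as in Theorem~\ref{T-euler-nor}) and noting that \eqref{E-euler-nor-bis} is exactly $\delta(\cD,\Sigma)=0$, I use that Lemma~\ref{L-parti-norm} preserves both $\delta$ and $\omega$ to assume $\cD$ is normal; the trivial partition $\bD=\emptyset$ is checked by hand ($\kappa=1=\omega+0+0$ on each of $\rP$, $\K^2$, $M_1$), so I also assume $\bD\neq\emptyset$, whence by Remark~\ref{R-parti-nor} each $\overline{D_j}$ is a compact surface with non-empty boundary. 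View $G=\bD\cup\partial\Sigma$ as a finite graph, with vertices the singular points $\cS(\cD)$ plus enough $2$-valent auxiliary vertices to kill vertex-free loops (altering neither $V-E$ nor $\sigma$). Counting valences gives $\sum_x(\deg x-2)=\sum_{\cS(\cD)}\iota(x)=2\sigma$, hence $E-V=\sigma$. Additivity of the compactly supported Euler characteristic for $\Sigma=G\sqcup\bigsqcup_j D_j$, with $\chi_c(D_j)=\chi(\overline{D_j})$ and $\chi(G)=V-E$, then yields the master relation $\kappa=\chi(\Sigma)+\sigma+\sum_j\bigl(1-\chi(\overline{D_j})\bigr)$.

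Next I would pass to $\Z/2$ coefficients, where orientability is read off from $w_1$ and where $1-\chi(\overline{D_j})=\dim H_1(\overline{D_j};\Z/2)$ because each domain has non-empty boundary. The long exact sequence of the pair $(\Sigma,G)$, with $H_2(\Sigma,G)\cong(\Z/2)^{\kappa}$ and $\dim H_1(\Sigma,G)=\sum_j(1-\chi(\overline{D_j}))$ by Lefschetz duality on each domain, rewrites the master relation as $\sum_j(1-\chi(\overline{D_j}))=\dim H_1(\Sigma;\Z/2)-r+b_0(G)-1$, where $r:=\dim\,\mathrm{im}\bigl(H_1(G;\Z/2)\to H_1(\Sigma;\Z/2)\bigr)$. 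Using $\chi(\Sigma)=1-\dim H_1(\Sigma;\Z/2)+b_2(\Sigma;\Z/2)$ and the numerical coincidence $b_2(\Sigma;\Z/2)+b_0(\partial\Sigma)=1$ (valid for each of $\rP$, $\K^2$, $M_1$), a one-line computation shows that the desired identity is equivalent to
\[
r \;=\; 1-\omega(\cD,\Sigma),
\]
namely: the net carries a one-dimensional mod-$2$ homology class when every domain is orientable, and carries none as soon as one domain is non-orientable.

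For $\rP$ and $M_1$ this is within reach, and here one may equivalently lift $\cD$ to the orientation double cover ($\bS^2$, resp.\ an annulus) and invoke Proposition~\ref{P-euler-or}. Directly: since $\dim H_1(\Sigma;\Z/2)=1$ one has $r\in\{0,1\}$ for free. If $\omega=1$, a non-orientable domain $D_{j_0}$ contains a one-sided simple closed curve $\mu$, which can be pushed into the open set $D_{j_0}$ and so is disjoint from $G$; every loop of $G$ has vanishing mod-$2$ intersection with $\mu$, and because $\mu$ generates $H_1(\Sigma;\Z/2)$ with $\mu\cdot\mu=1$ (for $M_1$, because $\Sigma\setminus\mu$ is an annulus), each such loop is null, so $r=0$. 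If $\omega=0$, all domains are orientable, so $w_1$ is invisible inside every domain and, by Mayer--Vietoris, must be detected on $G$, forcing $r\geq1$ and hence $r=1$. This settles the projective plane and the M\"obius strip.

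The main obstacle is the Klein bottle, precisely because $\dim H_1(\K^2;\Z/2)=2$ and the automatic bound $r\le1$ is lost. In fact the reduction above suggests that the statement as written cannot survive there: for any triangulation of $\K^2$ all domains are (orientable) open triangles, so $\omega=0$, while the $1$-skeleton carries all of $H_1(\K^2;\Z/2)$, giving $r=2$. One then computes $\kappa=F=E-V$ but $\omega+\beta+\sigma=1+(E-V)$, an excess of exactly $r-(1-\omega)=1$. Thus the genuine difficulty is to control $r$ on $\K^2$: the formula holds verbatim for $\rP$ and $M_1$, but on the Klein bottle the constant $\omega$ must apparently be replaced by a term recording how much of $H_1(\Sigma;\Z/2)$ the net actually carries (equivalently, reinstating a $\chi(\Sigma)$-dependence). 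Pinning down the correct $\K^2$-statement, or the extra hypotheses forcing $r=1-\omega$, is where the real work remains.
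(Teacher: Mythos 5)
First, note that the paper does not actually prove this statement: Theorem~\ref{T-euler-nor-bis} appears only inside a remark prefaced by ``one could dream, and try to prove'', and the only case established in the paper is Theorem~\ref{T-euler-nor} --- the M\"obius strip, for \emph{essential} regular partitions --- via the geometric cut-and-unfold arguments of Section~\ref{S-pfnor}. Judged on its own, your proposal reaches what I am convinced is the correct verdict, and the one the paper's hesitation points to: \eqref{E-euler-nor-bis} is \emph{false} for $\K^2$. Your counterexample is valid: for a simplicial triangulation of $\K^2$ (smoothed so that edges meet transversally, hence an essential, regular, even normal partition) one has $\kappa=F$, $\omega=0$ (all domains are open $2$-cells), $\beta=b_0(\bD)=1$, and $\sigma=\tfrac12\sum_v(\deg v-2)=E-V$, so \eqref{E-euler-nor-bis} would force $V-E+F=1$, contradicting $\chi(\K^2)=0$; the same count is consistent on $\rP$, where $\chi=1$. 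Your reduction of \eqref{E-euler-nor-bis} to the single equality $r=1-\omega$, where $r$ is the dimension of the image of $H_1(G;\Z/2)$ in $H_1(\Sigma;\Z/2)$ and $G=\bD\cup\partial\Sigma$, also checks out: the bookkeeping through $\kappa=\chi(\Sigma)+\sigma+\sum_j\bigl(1-\chi(\overline{D_j})\bigr)$, the exact sequence of the pair $(\Sigma,G)$, and the coincidence $\dim H_2(\Sigma;\Z/2)+b_0(\partial\Sigma)=1$ are all correct. This homological route is genuinely different from the paper's (which normalizes, classifies the domains, and cuts down to the planar formula of Proposition~\ref{P-euler-or}), and it isolates exactly why the Klein bottle misbehaves: there $\dim H_1(\Sigma;\Z/2)=2$, so $r$ is not confined to $\{0,1\}$, and a triangulation realizes $r=2$. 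Your argument that $\omega=1$ forces $r=0$ (a one-sided curve inside a non-orientable domain misses $G$, and annihilates the image of $H_1(G)$ by the intersection pairing on $\rP$, resp.\ because its complement in $M_1$ is an annulus) is sound.

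The one genuine gap is the converse step, $\omega=0\Rightarrow r\ge1$, on $\rP$ and $M_1$. The justification ``$w_1$ is invisible inside every domain and, by Mayer--Vietoris, must be detected on $G$'' is not a valid principle: on the torus $S^1\times S^1$ cut into two annuli by two parallel essential circles, the class $\alpha\in H^1(S^1\times S^1;\Z/2)$ that vanishes on the cutting circles and is nonzero on the transverse circle restricts to zero on both domains \emph{and} on $G$, yet $\alpha\neq0$. The loophole is the connecting homomorphism in Mayer--Vietoris: a class vanishing on all pieces need only be pulled back from the nerve (dual graph) of the decomposition. What closes the gap on $\rP$ is a feature you never invoke: $w_1\cup w_1\neq0$ in $H^2(\rP;\Z/2)$. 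If $\omega=0$ and $r=0$, the orientation double cover would be trivial over every closed domain, over a regular neighbourhood of $G$, and over the interface circles, hence pulled back from the nerve, a graph; then $w_1^2$ would be pulled back from the $H^2$ of a graph, i.e.\ zero --- a contradiction. For $M_1$, cap $\partial M_1$ with a disk to reduce to $\rP$ (the added domain is a disk, $G$ is unchanged, and $H_1(M_1;\Z/2)\to H_1(\rP;\Z/2)$ is an isomorphism), or simply quote Theorem~\ref{T-euler-nor}. Note that even the repaired argument only excludes $r=0$; it is the bound $r\le\dim H_1(\Sigma;\Z/2)=1$ that finishes $\rP$ and $M_1$, and exactly this bound is unavailable on $\K^2$, consistently with your counterexample where $r=2$. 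With that step repaired, your conclusions stand: \eqref{E-euler-nor-bis} holds for essential regular partitions of $\rP$ and $M_1$, while the Klein bottle clause must be dropped or replaced by a term recording $r$ --- so no proof of the statement as written can exist.
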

}
\end{rem}%% version 

\section{Proof of Theorem~\ref{T-euler-nor}}\label{S-pfnor}

 The idea to prove Theorem~\ref{T-euler-nor} is to examine how $\delta(\cD,\Sigma)$ changes when the partition $\cD$ and the surface $\Sigma$ are modified, starting out from a partition $\cD$ of the M\"{o}bius strip, and arriving at a partition $\widetilde{\cD}$ of a domain $\Omega$ in $\R^2$, on which we can apply \eqref{E-euler-or}.

\begin{lemma}\label{L-pfnor-2}
Let $\mathcal D$ be a regular partition of $M_1$. Assume that there exists a simple  piecewise $C^1$ curve $\ell : [0,1] \to M_1$, such that $\ell \subset \partial \cD$, $\ell(0), \ell(1) \in \partial M_1$,  and $\ell$ transversal to $\partial M_1$. If $\Omega:=M_1 \setminus \ell$ is simply-connected \footnote{More precisely, after scissoring $M_1$ along $\ell$, and  unfolding, one can view $\Omega$ as a subset $\widetilde \Omega$ of $\mathbb R^2$. The former $\ell$ is now split into two lines  $\ell_+$ and $\ell_-$ in the boundary of $\widetilde \Omega$. For simplicity, we denote by $\ell$, both the curve, and the image $\ell([0,1])$.}, then $\delta(\mathcal D,M_1)=0$.
\end{lemma}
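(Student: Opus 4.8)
The plan is to scissor $M_1$ along $\ell$, reduce to the planar Euler formula of Proposition~\ref{P-euler-or}, and account for the discrepancy term by term. Following the footnote, cutting $M_1$ along $\ell$ and unfolding realizes $\Omega := M_1\sm\ell$ as a bounded, simply-connected open set $\wt{\Omega}\subset\R^2$ with piecewise $C^1$ boundary; the arc $\ell$ splits into two boundary arcs $\ell_+,\ell_-$, while $\partial M_1$ is cut at $\ell(0),\ell(1)$ into two arcs, so that $\partial\wt{\Omega}$ is a single topological circle formed by these four arcs. Because $\ell\subset\bD$, every domain $D_j$ is disjoint from $\ell$ and is therefore carried homeomorphically onto a connected open subset of $\wt{\Omega}$; the induced family $\cD'$ is a regular partition of $\wt{\Omega}$ (the semi-arcs issuing from the singular points of $\cD$ lying on $\ell$ meet $\ell$, hence $\partial\wt{\Omega}$, transversally, by the transversality hypothesis on $\cD$).

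I would then compare $\kappa,\sigma,\beta$ on $(\cD,M_1)$ and $(\cD',\wt{\Omega})$. The domains are in bijection, so $\kappa(\cD',\wt{\Omega})=\kappa(\cD,M_1)$. For $\sigma$, only singular points on $\ell$ are affected: an interior singular point $p$ on the open arc, of index $\nu(p)-2$, splits into two boundary singular points of $\wt{\Omega}$ with indices $a,b$ where $a+b=\nu(p)-2$, and so contributes equally before and after; each endpoint $\ell(0)$, a boundary singular point of index $\rho=1+a'+b'$ (the $1$ coming from $\ell$ itself), splits into two boundary points of indices $a',b'$, so that $\sum_x\iota(x)$ drops by $1$ there, and likewise at $\ell(1)$. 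Hence $\sum_x\iota(x)$ decreases by $2$ and $\sigma(\cD',\wt{\Omega})=\sigma(\cD,M_1)-1$. For $\beta$, both $\partial M_1$ and $\partial\wt{\Omega}$ are connected, so $\beta$ equals $b_0(\bD\cup\partial M_1)-1$, respectively $b_0(\partial\cD'\cup\partial\wt{\Omega})-1$; the cutting map sends $\partial\cD'\cup\partial\wt{\Omega}$ onto $\bD\cup\partial M_1$ by gluing $\ell_+$ to $\ell_-$, and since both arcs lie in the connected set $\partial\wt{\Omega}$ this identification merges no distinct components, giving $b_0(\bD\cup\partial M_1)=b_0(\partial\cD'\cup\partial\wt{\Omega})$ and hence $\beta(\cD',\wt{\Omega})=\beta(\cD,M_1)$.

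Substituting these three identities into \eqref{E-euler-or}, namely $\kappa(\cD',\wt{\Omega})=1+\beta(\cD',\wt{\Omega})+\sigma(\cD',\wt{\Omega})$, yields $\kappa(\cD,M_1)=\beta(\cD,M_1)+\sigma(\cD,M_1)$, that is $\delta(\cD,M_1)=\omega(\cD,M_1)$. It therefore remains to prove $\omega(\cD,M_1)=0$. Suppose some $D_j$ were non-orientable. By Remarks~\ref{R-parti-or}(ii) it contains a M\"{o}bius strip, whose core $\gamma\subset D_j$ is orientation-reversing and therefore non-trivial in $\pi_1(M_1)\cong\Z$. But $\gamma$ avoids $\ell$, so $\gamma\subset\Omega=M_1\sm\ell$, which is simply connected by hypothesis; thus $\gamma$ is contractible in $\Omega$, hence in $M_1$, contradicting its non-triviality. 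Therefore $\omega(\cD,M_1)=0$ and $\delta(\cD,M_1)=0$.

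The main obstacle is precisely this orientability step: the term-by-term bookkeeping alone only yields $\delta(\cD,M_1)=\omega(\cD,M_1)$, so the genuine content of the lemma is the topological fact that a boundary arc with simply-connected complement forbids non-orientable domains. The remaining points are routine but need care: verifying that cutting preserves regularity, justifying the connectivity count for $\beta$, and handling the degenerate cases in the index computation (for instance, when a split singular point inherits no emanating semi-arc and thereby ceases to be singular, so that the equalities $a+b=\nu(p)-2$ and $\rho=1+a'+b'$ must be read with some indices equal to zero).
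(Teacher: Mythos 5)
Your proof is correct and follows essentially the same route as the paper's: scissor $M_1$ along $\ell$, compare $\kappa$, $\beta$, $\sigma$ term by term (with the identical index bookkeeping, i.e.\ indices summing to $\nu(x)-2$ at interior points of $\ell$ and a drop of $1$ at each endpoint, so that $\sigma$ on $M_1$ exceeds $\sigma$ on the unfolded domain by $1$), and then apply the planar formula \eqref{E-euler-or}. The only cosmetic difference is the orientability step: the paper simply observes that every domain lies in the simply-connected, hence orientable, surface $\Omega$, whereas you argue via an orientation-reversing core curve being non-trivial in $\pi_1(M_1)\cong\Z$; both arguments are valid.
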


\begin{proof}
Observing that $\mathcal D$ can also be viewed as a partition of $\Omega$, it is enough to prove that
$$\delta(\mathcal D,\Omega)= \delta(\mathcal D,M_1)\,.
$$
For this, we make the following observations.
\begin{itemize}
\item Since $1= b_0(\bD \cup \partial \Omega) = b_0(\bD \cup \partial M_1)$ and $b_0(\partial \Omega)= b_0(\partial M_1)$, we have
$$
 \beta(\mathcal D,\Omega) = \beta (\mathcal D,M_1)\,.
 $$
 \item  $\omega(\mathcal D, M_1)=  \omega(\mathcal D, \Omega)=0$. Indeed, each domain in $\cD$ is orientable because $\Omega$ is simply-connected.
 \item $\kappa(\mathcal D,M_1) = \kappa(\mathcal D,\Omega)$.
 \item Let $x$ be a singular point of  $\mathcal D$ in $M_1$ belonging to $\ell$, with index $\nu (x)$. After scissoring, we obtain two boundary points $x_{+} \in \ell_{+}$ and $x_{-} \in \ell_{-}$, with indices $\rho_+$ and $\rho_-$ such that $\rho_+ + \rho_- = \nu (x)- 2\,$.
 \item After scissoring, the boundary singular point $\ell(0)$ of $\ell$ yields two boundary singular points $y_{0,+}$ and $y_{0,-}$ such that $\rho_{0,+} + \rho_{0,-} =\rho(\ell(0)) -1$, with a similar property for $\ell(1)$.
 \item As a consequence of the two previous items, we have
 $$
 \sigma (\mathcal D,M_1)= \sigma (\mathcal D,\Omega) + 1\;.
 $$
\end{itemize}
Since $\Omega$ is homeomorphic to a simply-connected domain in $\R^2$, we have $\kappa(\cD,\Omega) = 1 + \beta(\cD,\Omega) + \sigma(\cD,\Omega)$. Taking into account the preceding identities, it follows that $\delta(\cD,\Omega) = 0$.
\end{proof}

\begin{lemma}\label{L-pfnor-4}
Let $\mathcal D$  be a regular  partition of $M_1$. Then, there exists a simple  piecewise $C^1$ path $\ell : [0,1] \to M$ such that:
\begin{itemize}
\item $M_1\setminus \ell$ is simply connected;
\item $\ell$ crosses $\bD$ and hits $\partial M_1$ transversally;
\item $\mathcal S(\cD)\cap \ell =\emptyset.$
\end{itemize}
\end{lemma}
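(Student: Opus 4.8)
The plan is to realize $M_1$, up to a diffeomorphism, as the total space of the twisted interval bundle over its core circle $c\simeq\bS^1$, with projection $\pi:M_1\to c$, and to take for $\ell$ a suitably chosen fibre $\ell_s=\pi^{-1}(s)$. Concretely I would use the model $M_1=\bigl([0,1]\times[-1,1]\bigr)/\!\sim$, with $(0,t)\sim(1,-t)$, so that $\pi(x,t)=x\in\R/\Z$ and the fibres are the vertical arcs $\ell_s=\set{s}\times[-1,1]$. For each $s\in(0,1)$ the arc $\ell_s$ is a simple, properly embedded $C^1$ path with endpoints $(s,\pm1)\in\partial M_1$, and it meets $\partial M_1$ transversally, since in this model the fibres are vertical while the boundary is horizontal. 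Moreover, cutting $M_1$ along $\ell_s$ amounts to restricting the bundle to the interval obtained by cutting $c$ at $s$; the bundle over an interval is trivial, i.e. a rectangle, hence $M_1\setminus\ell_s$ is simply connected. Thus every fibre $\ell_s$, $s\in(0,1)$, already fulfils the first bullet and the transversality to $\partial M_1$ in the second; it remains only to choose $s$ so that $\ell_s$ additionally avoids $\cS(\cD)$ and meets $\bD$ transversally.

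For this I would argue by general position. By Definition~\ref{D-parti-reg}, $\cS(\cD)$ is finite and, away from $\cS(\cD)$, the set $\bD$ is a piecewise $C^1$ immersed curve; by compactness of $\bD$ this means $\bD$ is a finite union of $C^1$ arcs together with a finite exceptional set consisting of $\cS(\cD)$ and the finitely many corners where $\bD$ fails to be $C^1$. Each point of this finite exceptional set lies on exactly one fibre, so it suffices to discard the corresponding finite set of parameters $s$. On the complement, write $\bD$ as a finite union of $C^1$ arcs $\gamma_i$ and consider the restrictions $\pi\circ\gamma_i$. By Sard's theorem, each $\pi\circ\gamma_i$ has a set of critical values of measure zero in $c$, and a regular value $s$ is precisely a parameter for which $\ell_s$ meets $\gamma_i$ transversally at every point of $\ell_s\cap\gamma_i$ (every such intersection is then a genuine crossing).

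Intersecting the finitely many full-measure sets of regular values of the $\pi\circ\gamma_i$, and removing the finite set of exceptional parameters above, I obtain a set of admissible parameters $s$ of full measure, in particular nonempty. Choosing such an $s$ and setting $\ell=\ell_s$ then yields a simple $C^1$ (a fortiori piecewise $C^1$) path with $M_1\setminus\ell$ simply connected, $\ell$ transverse to $\bD$, $\ell\cap\cS(\cD)=\emptyset$, and endpoints $(s,\pm1)\in\partial M_1$ lying off $\cS(\cD)$ (hence off the boundary singular points) and meeting $\partial M_1$ transversally. This is exactly the path required by the lemma.

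The main obstacle I anticipate is the bookkeeping around the non-smooth locus of $\bD$: one must impose transversality only where $\bD$ is genuinely $C^1$, while simultaneously ensuring that the single fibre $\ell_s$ avoids all the ``bad'' points at once, namely the interior singular points $y_i$, the boundary singular points $z_j$, and the finitely many corners of the piecewise-$C^1$ curve. Since all of these form a finite set, and each lies on only one fibre, this is handled by deleting a finite (hence measure-zero) set of parameters before invoking Sard's theorem; the only point that needs care is to phrase everything in terms of the measure of critical and forbidden values in $c$, after which the nonemptiness of the admissible set of parameters is immediate.
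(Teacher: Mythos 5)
Your proof is correct, but it implements the key step by a genuinely different mechanism than the paper. The paper's own proof is a one-sentence deformation argument: start from an arbitrary line $\ell_0$ whose complement is simply connected, then perturb $\ell_0$ to achieve the transversality and avoidance properties. You instead fix the entire one-parameter family of fibres $\ell_s$ of the twisted interval bundle $M_1\to\bS^1$ -- each of which automatically cuts $M_1$ into a rectangle -- and select a good parameter $s$ by Sard's theorem applied to the compositions $\pi\circ\gamma_i$ over the finitely many $C^1$ arcs $\gamma_i$ of $\bD$, after discarding the finitely many parameters whose fibre meets $\cS(\cD)$ or a corner of $\bD$. The trade-off: your parametric, measure-theoretic route makes the ``general position'' step fully rigorous with no hidden work -- in particular the simple connectivity of the complement never has to be re-checked, since it holds for every member of the family -- whereas the paper's sketch leaves to the reader both the construction of the perturbation and the verification that deforming $\ell_0$ does not destroy simple connectivity of its complement. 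Conversely, the paper's formulation is model-free (any initial cutting line will do), while yours leans on the explicit bundle structure of $M_1$; this costs nothing here, but it is why your argument is tailored to the M\"{o}bius strip rather than to an arbitrary surface admitting a non-separating properly embedded arc. Two details you handle correctly and should keep explicit in any write-up: Sard's theorem does apply in the $C^1$ category for maps between $1$-manifolds (so no smoothing of $\bD$ is needed), and every point of $\bD\cap\partial M_1$ is a boundary singular point by Definition~\ref{D-parti-reg}\,(ii), so avoiding $\cS(\cD)$ already guarantees that the endpoints $(s,\pm 1)$ of the chosen fibre lie off $\bD$.
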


\begin{proof}
Starting from any line $\ell_0$ such that $M_1\setminus \ell_0$ is simply connected, it is easy to deform $\ell_0$ in order to get the two other properties.
\end{proof}

%{\clr Add some assumptions on the regularity of the domains in $\cD$. One
%can start out with  $\set{y=y_0}$. At least in the nodal case, there exists
%a horizontal path. If the curves in $\bD$ are real analytic \ldots \medskip}

\begin{lemma}\label{L-pfnor-6}
Let $\mathcal D$  be a regular  partition of $M_1$ whose elements $D_i$ are all simply-connected. Let $\ell: [0,1] \to M_1$ be a path as given by Lemma~\ref{L-pfnor-4}. Let $\widetilde{\mathcal D}$ be the partition of $M_1$ whole elements are the connected components of the $D_i \sm \ell$. Then, $\partial \widetilde{\mathcal D} =\partial \mathcal D \cup \ell$, and $\delta(\mathcal D,M_1)= \delta(\widetilde{\mathcal D},M_1)$. In particular $\widetilde{\mathcal D}$ satisfies the assumptions of Lemma \ref{L-pfnor-2}, and
 $$ \delta(\mathcal D,M_1) =0\,.$$
\end{lemma}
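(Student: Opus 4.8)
The plan is to reduce the general case to the already-handled simply-connected case of Lemma~\ref{L-pfnor-2} by cutting the partition along $\ell$ and tracking how each of the four quantities $\omega,\beta,\sigma,\kappa$ changes. The key point is that $\widetilde{\mathcal D}$ is obtained from $\mathcal D$ by superimposing the single arc $\ell$, so everything reduces to a local/additive bookkeeping of the cut. First I would verify the asserted identity $\partial\widetilde{\mathcal D}=\partial\mathcal D\cup\ell$: since $\widetilde{\mathcal D}$ consists of the connected components of the sets $D_i\sm\ell$, its boundary set is the union of the old boundary $\partial\mathcal D$ and the cutting curve $\ell$ (the part of $\ell$ interior to the domains becomes new boundary, and the endpoints already lie on $\partial M_1$).

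The heart of the argument is to check $\delta(\mathcal D,M_1)=\delta(\widetilde{\mathcal D},M_1)$ by comparing the four terms one at a time. For $\kappa$: each time $\ell$ passes through a domain $D_i$, it cuts it into finitely many components; because each $D_i$ is simply-connected and $\ell$ is a simple arc crossing $\bD$ transversally, each such crossing of a domain increases the domain count by exactly one, so if $\ell$ meets $\bD$ in $m$ interior transversal points then $\kappa$ increases by a controlled amount. For $\sigma$: by Lemma~\ref{L-pfnor-4} we have $\mathcal S(\cD)\cap\ell=\emptyset$, so $\ell$ introduces new singular points only where it crosses $\bD$ transversally (each such crossing is a new interior singular point with $\nu=4$, hence $\iota=2$, contributing $1$ to $\sigma$) and at the two endpoints $\ell(0),\ell(1)\in\partial M_1$ (each a new boundary singular point). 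For $\omega$: since $\ell$ is disjoint from $\mathcal S(\cD)$ and is merely added as a new piece of boundary cutting existing domains, it does not create any non-orientable domain that was not already present, nor does it change the orientability character of the domains it subdivides, so $\omega(\widetilde{\mathcal D},M_1)=\omega(\mathcal D,M_1)$. For $\beta$: since $\ell$ joins $\partial M_1$ to itself and $M_1\sm\ell$ is connected, adjoining $\ell$ to $\bD\cup\partial M_1$ does not change $b_0(\bD\cup\partial M_1)$, so $\beta$ is unchanged. One then checks that the increments in $\kappa$ and $\sigma$ cancel in the combination $\sigma-\kappa$, yielding $\delta(\widetilde{\mathcal D},M_1)=\delta(\mathcal D,M_1)$.

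The final step is to invoke Lemma~\ref{L-pfnor-2}. By construction $\ell\subset\partial\widetilde{\mathcal D}$, $\ell$ is a simple piecewise $C^1$ curve joining two boundary points transversally, and $M_1\sm\ell$ is simply-connected by the choice of $\ell$ in Lemma~\ref{L-pfnor-4}. Hence $\widetilde{\mathcal D}$ satisfies the hypotheses of Lemma~\ref{L-pfnor-2}, which gives $\delta(\widetilde{\mathcal D},M_1)=0$, and therefore $\delta(\mathcal D,M_1)=0$ as well.

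I expect the main obstacle to be the precise bookkeeping at the crossings: I must confirm that each transversal intersection of $\ell$ with $\bD$ contributes consistently to both $\kappa$ (one new domain) and $\sigma$ (one new interior singular point of index $2$), so that the two effects exactly offset in $\delta$, and that the endpoints on $\partial M_1$ likewise contribute compatibly to $\kappa$ and $\sigma$. A subtle secondary issue is justifying that cutting a simply-connected domain $D_i$ along the arc $\ell\cap D_i$ really yields only components that are again subsets of a simply-connected region, so that orientability is genuinely preserved and the count of new components is exactly the number of crossings; this is where the simple-connectedness hypothesis on the $D_i$ is essential, and care is needed if $\ell$ enters and leaves a single domain several times.
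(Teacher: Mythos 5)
Your overall strategy is exactly the paper's: superimpose $\ell$ on $\cD$, track the four quantities, observe that with $m$ interior crossing points both $\kappa$ and $\sigma$ increase by $m+1$ (each crossing is an interior singular point with $\nu=4$, hence $\iota=2$, contributing $1$ to $\sigma$; the two endpoints on $\partial M_1$ contribute the remaining $1$; each of the $m+1$ sub-arcs of $\ell$ cuts a simply-connected domain into exactly two pieces), so that the increments cancel in $\delta$, and then invoke Lemma~\ref{L-pfnor-2}. Your index computation is even slightly more careful than the paper's text, which writes ``$\iota=4$'' at the crossings where it means $\nu=4$, i.e.\ $\iota=2$.

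There is, however, one genuine flaw: your justification of $\beta(\widetilde{\cD},M_1)=\beta(\cD,M_1)$ is a non sequitur. You argue that $b_0(\bD\cup\partial M_1)$ is unchanged upon adjoining $\ell$ ``since $\ell$ joins $\partial M_1$ to itself and $M_1\sm\ell$ is connected.'' But adjoining a connected arc to a closed set preserves $b_0$ if and only if the arc meets exactly one connected component of that set; the connectedness of the complement $M_1\sm\ell$ has no bearing on this. Since $\ell$ crosses $\bD$ at the points $x_1,\dots,x_m$, if one of the components of $\bD$ so crossed were \emph{not} $\bD$-connected to $\partial M_1$, adding $\ell$ would merge it with the component containing $\partial M_1$, $\beta$ would strictly decrease, and the cancellation yielding $\delta(\widetilde{\cD},M_1)=\delta(\cD,M_1)$ would break down. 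What actually rescues this step --- and it is precisely here that the hypothesis that all the $D_i$ are simply connected must be used for $\beta$, as the paper does, albeit without detail --- is that simple-connectedness of every domain forces every connected component of $\bD$ to be attached to $\partial M_1$ inside $\bD\cup\partial M_1$: a component of $\bD$ disjoint from $\partial M_1$ has no free ends (regularity imposes $\nu\ge 3$ at interior vertices), hence contains a simple closed curve, and the domains adjacent to such a curve cannot all be simply connected. Your proof needs this observation (or some substitute for it) in place of the argument via connectedness of $M_1\sm\ell$; with that repair, your bookkeeping coincides with the paper's proof.
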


\begin{proof}
Denote by $y_i, x_1,\dots , x_N,y_f$ the points of $\ell \cap \partial \mathcal D$,
where
\begin{itemize}
\item[$\diamond$] $y_i$ and $y_f$ belong to $\partial M_1$;
\item[$\diamond$] each open interval $J$ on the path $\ell$, delimited by two consecutive points in the sequence, is contained in some element $D_J$ of the partition, and the end points of $J$ belong to $\partial D_J$.
   \end{itemize}

Using the assumption that the elements of $\cD$ are simply-connected, and comparing $\widetilde{\cD}$ with $\cD$, we observe that:
\begin{itemize}
  \item[$\diamond$] $\kappa(\widetilde{\cD})=\kappa(\mathcal D) + N+1$;
  \item[$\diamond$] the partition $\widetilde{\cD}$ has two extra simple singular boundary points, and $N$ interior singular points with index $\iota = 4$, so that
$$ \sigma( \widetilde{\mathcal D})= \sigma(\mathcal D) + N+1\,;$$
  \item[$\diamond$] $\beta(\widetilde{\cD}) = \beta(\cD)$ and $\omega(\widetilde{\cD}) = \omega(\cD) = 0$.
\end{itemize}
The lemma follows.
\end{proof}

\begin{lemma}\label{L-pfnor-8}
Let $S$ be (the interior of) a surface with or without boundary. Let $D$ be an open subset of $S$, and $K \subset D$ a compact subset. Assume that $D\sm K$ is connected. Then, $S\sm K$ is connected.
\end{lemma}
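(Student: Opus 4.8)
The plan is to prove this by elementary point-set topology, using only that a surface is Hausdorff (so that the compact set $K$ is closed in $S$, and hence $S\sm K$ is open in $S$) together with the connectedness of $S$ and of $D\sm K$. The case $K=\emptyset$ is trivial, so I would assume $K\neq\emptyset$, whence $D\neq\emptyset$. I would argue by contradiction: suppose $S\sm K$ is disconnected and derive a proper nonempty clopen subset of $S$.

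First I would write $S\sm K = U\sqcup V$ with $U,V$ nonempty and open in $S\sm K$; since $S\sm K$ is open in $S$, both $U$ and $V$ are open in $S$. Because $D\sm K$ is connected it must lie entirely in one piece, say $D\sm K\subset U$. The next step is to show $V\subset S\sm D$: indeed, a point of $V\cap D$ would avoid $K$ (as $V\subset S\sm K$) and hence belong to $D\sm K\subset U$, contradicting $U\cap V=\emptyset$.

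The crucial step is to upgrade ``$V$ closed in $S\sm K$'' to ``$V$ closed in $S$''. Here I would use that $S\sm D$ is closed in $S$ (as $D$ is open): from $V\subset S\sm D$ it follows that the closure $\overline{V}$ of $V$ in $S$ is still contained in $S\sm D$, so $\overline{V}$ is disjoint from $K\subset D$ and therefore $\overline{V}\subset S\sm K$. Since $V$ is closed in the subspace $S\sm K$, its closure there equals $V$, and that subspace closure is $\overline{V}\cap(S\sm K)=\overline{V}$; hence $\overline{V}=V$, i.e. $V$ is closed in $S$. Being simultaneously open, closed, and nonempty in the connected space $S$, we would get $V=S$, which is impossible because $V\subset S\sm D$ and $D\neq\emptyset$. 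This contradiction shows $S\sm K$ is connected.

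The main obstacle is exactly this transfer of closedness from the subspace $S\sm K$ to all of $S$; a priori the decomposition $U\sqcup V$ only yields clopen sets inside $S\sm K$, which says nothing about the connectedness of $S$. The observation that makes it work is that $V$, lying in the \emph{closed} set $S\sm D$, has its $S$-closure trapped away from $K$, so the subspace closure and the ambient closure coincide. I would also flag that the conclusion requires $S$ to be connected: otherwise components of $S$ disjoint from $D$ would remain in $S\sm K$ and break the statement.
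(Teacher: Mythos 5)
Your proof is correct, and it takes a genuinely different route from the paper. The paper's argument is constructive and path-based: given two points of $S\setminus K$, it joins them by a path in $S$ (using that a connected surface is path-connected), then reroutes the offending portion of the path -- between the moment just before it first enters $K$ and the moment just after it last leaves $K$ -- through $D\setminus K$, using that $D\setminus K$, being open and connected in a surface, is path-connected. Your argument instead is purely point-set: assuming a disconnection $S\setminus K = U\sqcup V$ with $D\setminus K\subset U$, you trap $V$ inside the closed set $S\setminus D$, so that its ambient closure avoids $K$ and its subspace-closedness upgrades to closedness in $S$, making $V$ a proper nonempty clopen set -- contradiction. What each approach buys: yours is more elementary and more general, since it never uses the manifold structure at all (only that $S$ is connected and that $K$ is closed in $S$, which Hausdorffness plus compactness gives), whereas the paper's proof implicitly relies on local path-connectedness to convert connectedness of $D\setminus K$ into path-connectedness; on the other hand, the paper's proof produces an explicit path, which matches how the lemma is used downstream (showing that unions of disjoint simple closed curves do not disconnect $M_1$), and avoids any discussion of clopen sets. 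Your closing remark is also apt: both proofs need $S$ connected, which the paper invokes explicitly ("Since $S$ is connected, there exists a path $\ell$") and which is implicit in calling $S$ a surface; your counterexample sketch (a component of $S$ disjoint from $D$) shows the hypothesis is not removable.
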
%

\begin{proof} It suffices to prove the following claim.\smallskip

\emph{Claim.}~ Given any $x_0, x_1 \in S\sm K$, there exists a path $\gamma : [0,1] \to S\sm K$ such that $\gamma(0) = x_0$ and $\gamma(1) = x_1$.\medskip

\noindent $\diamond$~  Since $S$ is connected, there exists a path $\ell : [0,1] \to S$, with $\ell(0) = x_0$ and $\ell(1) = x_1$. If $x_0$ and $x_1$ both belong to $D$, the claim is clear. Without loss of generality, we may now assume that $x_0 \not \in D$.\smallskip

\noindent $\diamond$~ Define the set $J:= \set{t \in [0,1] ~|~ \ell(t) \not \in K}$. Since $x_0 \not \in K$, the set $J$ in not empty, and we define $t_0 := \inf J$. Clearly, $\ell(t_0) \in K$, and since $K \subset D$, $\ell(t_0) \in D$. Since $D$ is open, there exists some $\varepsilon > 0$ such that $\ell ([t_0 - \varepsilon,t_0+\varepsilon]) \subset D$, and hence $x_{\varepsilon} = \ell(t_0-\varepsilon) \in D$. If $x_1 \in D$, then there exists a path $\gamma_{\varepsilon}$ from $x_{\varepsilon}$ to $x_1$, contained in  $D\sm K$. The path $\gamma_{\varepsilon} \cdot \ell|_{[0,t_0-\varepsilon]}$ is contained in $S\sm K$ and links $x_0$ and $x_1$. \smallskip

\noindent $\diamond$~ If $x_1 \not \in D$, we can consider the path $\ell^{-1}$ and apply the preceding argument. Define $t_1 = \sup J$. Then $t_1$ exists and $\ell(t_1) \in K$, and there exists $\delta > 0$ such that $x_{\delta} = \ell(t_1+\delta) \in D$. There exists a path $\gamma_{\varepsilon,\delta}$ linking $x_{\varepsilon}$ and $x_{\delta}$ in $D\sm K$. The path $\ell|_{[t1+\delta,1]}\cdot \gamma_{\varepsilon,\delta} \cdot \ell|_{[0,t_0-\varepsilon]}$ is contained in $S\sm K$ and links $x_0$ and $x_1$.

\end{proof}%

\begin{lemma}\label{L-pfnor-10}
Let $\cD$ be a normal partition of $M_1$, and let $D$ be an element of $\mathcal D$. Assume that $D$ is orientable. Then, $D$ is homeomorphic to a sphere with $q$ discs removed, and one can find $q$  piecewise $C^1$ cuts $\ell_j$  (i.e. disjoint simple curves joining two points of $\partial D$ and  hitting the boundary transversally) such that $D\setminus \cup \ell_j$ is simply connected.
\end{lemma}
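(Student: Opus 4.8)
The plan is to split the statement into a purely topological identification of $D$, which carries all the difficulty, followed by a standard cutting construction. Since $\cD$ is normal, Remark~\ref{R-parti-nor} ensures that $\overline D$ is a compact, piecewise $C^1$ surface with boundary (possibly with corners); as an element of a partition $D$ is connected, and by hypothesis it is orientable. The classification of compact orientable surfaces with boundary then presents $\overline D$ as the sphere $\bS^2$ with $g$ handles and $q$ open discs removed, for some $g\ge 0$ and $q \ge 1$, where $q$ is the number of boundary circles. Thus the first assertion reduces to the single claim $g=0$.

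To prove $g=0$ I would pass to the orientation double cover $\pi\colon C \to M_1$, which for the M\"obius strip is the cylinder $C$. Because $D$ is connected and orientable, the pulled-back covering $\pi^{-1}(D)\to D$ is trivial, so one sheet yields a homeomorphism from $D$ onto an open connected subset $\wt D \subset C$. Since $C$ embeds in $\bS^2$ (it is $\bS^2$ with two discs removed), $\wt D$ is an open connected subset of $\bS^2$, hence planar, i.e.\ of genus $0$: no subsurface of a genus-$0$ surface can contain a handle. Carrying this back through the homeomorphism shows that $D$, and therefore $\overline D$, has genus $0$, which is exactly the desired identification with $\bS^2$ minus $q$ discs.

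For the cuts, I would label the boundary circles $C_1,\dots,C_q$ and pick pairwise disjoint, properly embedded arcs arranged as a spanning tree on $\{C_1,\dots,C_q\}$ (for instance $\ell_j$ joining $C_j$ to $C_{j+1}$), each realised as a piecewise $C^1$ curve transversal to $\partial D$; this is possible precisely because $g=0$. Cutting $\overline D$ along their union merges all boundary circles into one and, since there is no handle, leaves a connected simply connected surface. The Euler-characteristic bookkeeping $\chi(\overline D)=2-q$, together with the fact that each such cut raises $\chi$ by $1$, confirms that the $q-1$ arcs of the spanning tree bring $\chi$ up to $1$, i.e.\ to a disc. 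I expect the genus-$0$ step to be the only real obstacle: the delicate points there are the clean passage to the double cover together with the triviality of the cover over the orientable piece $D$, and the care required with corners and the piecewise $C^1$ structure when invoking the classification theorem. Once $g=0$ is in hand, the cutting construction is routine.
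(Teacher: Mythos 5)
Your proposal is correct in substance, but it reaches the crucial step --- ruling out handles --- by a genuinely different route than the paper. The paper works intrinsically in $M_1$: writing $\overline D \cong \Sigma_{0,g,q}$, it observes that $g\geq 1$ would produce $g$ disjoint simple closed curves whose union does not disconnect $D$, hence (by Lemma~\ref{L-pfnor-8}) does not disconnect $M_1$; since $M_1$ has genus $1$ this forces $g\leq 1$, and $g=1$ is excluded because such a curve would be two-sided ($D$ being orientable), whereas a non-separating simple closed curve in the M\"obius strip must be one-sided \cite{Kre2014}. You instead pass to the orientation double cover $\pi: C \to M_1$ (the cylinder): connectedness and orientability of $D$ make $\pi^{-1}(D)\to D$ a trivial double cover, a sheet embeds $D$ into $C\subset \bS^2$, and a connected open subset of $\bS^2$ can contain no handle (two transverse simple closed curves meeting exactly once would have odd mod~$2$ intersection number, impossible in $\bS^2$). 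Your version avoids Lemma~\ref{L-pfnor-8} and the classification of curves on the M\"obius strip, at the price of covering-space input (restriction of the orientation cover to open subsets and its triviality over connected orientable pieces, cf.\ \cite[Theorem~5.3.29]{BeGo1988}); the paper's version stays elementary and intrinsic. Both arguments are sound, and both ultimately use something special about $M_1$ (genus $1$ in one case, planarity of the covering cylinder in the other). Your cutting construction (spanning-tree arcs plus Euler-characteristic bookkeeping) is also more explicit than the paper's, which merely draws cuts on the model $\Sigma_{0,0,q}$ and pulls them back.

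Two caveats. First, you produce $q-1$ cuts, not the $q$ announced in the lemma; your count is in fact the correct one, since each cut raises $\chi$ by $1$, so $q-1$ cuts take $\chi$ from $2-q$ to $1$ (a disc), while a $q$-th cut would force $\chi=2$ and hence disconnect the result. The ``$q$'' in the statement is an off-by-one slip, but you should flag the discrepancy rather than silently prove a different count. Second, your parenthetical ``this is possible precisely because $g=0$'' misattributes the difficulty: disjoint spanning-tree arcs exist on any compact surface with boundary regardless of genus (what $g=0$ buys is that the cut-open surface is a disc), and the piecewise $C^1$, transversal realisation of the arcs requires a separate approximation argument --- the paper supplies it in Claim~\ref{C-pfnor-10}, covering a continuous cut by finitely many disks avoiding $\partial D$ and replacing it by geodesic segments. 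You should include or cite that smoothing step explicitly rather than fold it into the genus claim.
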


\begin{proof}
Since $\cD$ is normal, the domain $D$ is an orientable surface with boundary. According to the classification theorem \cite[Chap.~6]{GaXu}, $D$ is homeomorphic to some $\Sigma_{0,g,q}$, a sphere with $g$ handles attached, and $q$ disks removed. We claim that $g=0$. Indeed, if $g\not = 0$, there exists $g$ disjoint simple closed curves whose union does not disconnect $D$ and hence, by Lemma~\ref{L-pfnor-8} does not disconnect $M_1$. Since $M_1$ has genus $1$, we must have $g\le 1$. If $g=1$, we have a simple closed curve $\gamma$ which disconnects $D$, and preserves orientation since $D$ is orientable. A simple closed curve which disconnects $M_1$ reverses orientation, a contradiction, see \cite{Kre2014}.  One can draw cuts on the model surface $\Sigma_{0,0,q}$ and pull them back to $D$.

\begin{claim}\label{C-pfnor-10}
One can find piecewise $C^1$ cuts
\end{claim}%
Indeed, we can start from a continuous cut $\gamma :[0,1] \to M_1$, with $\gamma(0,1)\subset D$, joining two points $\gamma(0) \in \Gamma_0$ and $\gamma(1) \in \Gamma_1$, two components of $\partial D$. In order to finish the proof, we need to approximate $\gamma$ by a piecewise $C^1$ path $\gamma_1$ which is transversal to $\partial D$ at $\gamma(0)$ and $\gamma(1)$. For this purpose, we choose $B(\gamma(0),2r_0)\cap D$,  $B(\gamma(1),2r_1)\cap D$, with $r_0, r_1$ small enough so that these sets do not intersect $\partial D \sm \Gamma_i$. We choose $t_0, t_1 \in (0,1)$ such that $t_0 \in B(\gamma(0),r_0)$, $\gamma(t_1)\in B(\gamma(1),r_1)$. We cover $\gamma([t_0,t_1])$ with small disks $B(\gamma(t),r) \subset D$, with $r$ small enough so that the disks $B(\gamma(t),r)$ do not meet $\partial D$. By compactness, we can extract a finite covering of $\gamma([t_0,t_1])$. We can now easily construct the desired path $\gamma_1$, close to $\gamma$ by using geodesics segments contained in the disks.~~$\checkmark$

\end{proof}

\begin{lemma}\label{L-pfnor-12}
Let $\mathcal D$  be a normal  partition of $M_1$ such that all the domains $D_i \in \cD$ are orientable. Then, there exists a new partition $\widetilde{\mathcal D}$ of $M_1$ all of whose domains are simply-connected, and such that
$ \delta(\mathcal D,M_1) = \delta(\widetilde{\mathcal D},M_1)$.\\
In particular,
$$ \delta(\mathcal D,M_1) =0\,.$$
\end{lemma}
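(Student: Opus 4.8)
The plan is to turn every (multiply-connected) domain of $\cD$ into a single simply-connected domain by cutting it along the arcs supplied by Lemma~\ref{L-pfnor-10}, and to check that this operation leaves $\delta(\,\cdot\,,M_1)$ unchanged; Lemma~\ref{L-pfnor-6} then finishes the argument. First I would apply Lemma~\ref{L-pfnor-10} to each $D_i \in \cD$: since $\cD$ is normal and every $D_i$ is orientable, $D_i$ is homeomorphic to a sphere with $q_i$ discs removed, and there are disjoint piecewise $C^1$ cuts $\ell_{i,1},\dots,\ell_{i,c_i}$, joining points of $\partial D_i$ transversally, with $D_i \sm \bigcup_j \ell_{i,j}$ simply-connected. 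Choosing these cuts generically, I would also require that they avoid $\cS(\cD)$ and meet $\bD \cup \partial M_1$ only at smooth points. I then set $\widetilde{\cD}$ to be the partition whose domains are the connected components of the sets $D_i \sm \bigcup_j \ell_{i,j}$; by construction every domain of $\widetilde{\cD}$ is simply-connected, $\widetilde{\cD}$ is again regular, and $\partial \widetilde{\cD} = \bD \cup \bigcup_{i,j}\ell_{i,j}$.

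Next I would compare the four quantities in $\delta$. Because each $D_i \sm \bigcup_j \ell_{i,j}$ is simply-connected, hence connected, a single simply-connected domain replaces each $D_i$; therefore $\kappa(\widetilde{\cD},M_1) = \kappa(\cD,M_1)$ and, the new domains being simply-connected, $\omega(\widetilde{\cD},M_1) = \omega(\cD,M_1) = 0$. The effect on $\sigma$ and $\beta$ I would evaluate one cut at a time. Each cut $\ell_{i,j}$ creates exactly two new singular points, namely its endpoints; each is either an interior point with $\nu = 3$ or a boundary point with $\rho = 1$, so each has index $\iota = 1$. Hence a single cut raises $\sum_x \iota(x)$ by $2$, i.e. increases $\sigma$ by $1$. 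On the other hand, the interior of each cut lies in the open domain $D_i$, which contains no point of $\bD \cup \partial M_1$, so the cut meets the boundary set only at its two endpoints, which lie on two distinct boundary components of $D_i$; I claim these two components belong to two different connected components of $\bD \cup \partial M_1$, so that attaching the cut merges them and decreases $b_0(\bD\cup\partial M_1)$, hence $\beta$, by $1$. Granting the claim, each cut contributes $+1$ to $\sigma$ and $-1$ to $\beta$, and the two effects cancel: $\delta(\widetilde{\cD},M_1) = \delta(\cD,M_1)$. Finally, $\widetilde{\cD}$ satisfies the hypotheses of Lemma~\ref{L-pfnor-6}, which gives $\delta(\widetilde{\cD},M_1) = 0$, whence $\delta(\cD,M_1) = 0$.

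The main obstacle is the claim just used: that the two boundary components of $D_i$ joined by a cut lie in distinct connected components of $\bD \cup \partial M_1$. This is precisely where the topology of $M_1$ enters. I would argue it by pushing one of the two boundary circles slightly into the orientable surface $D_i$, obtaining a two-sided simple closed curve $\gamma$ in $M_1$. A two-sided simple closed curve in the M\"{o}bius strip is separating --- it either bounds a disc or is parallel to $\partial M_1$, the one-sided core being the only non-separating type --- so $\gamma$, and hence $D_i$, separates the two boundary circles in $M_1$. Since any path inside $\bD \cup \partial M_1$ avoids the open set $D_i$, no such path can connect the two circles, which proves the claim. I expect this separation statement, together with the verification that no cut disconnects a domain (so that $\kappa$ is unchanged), to be the delicate part; it is of the same nature as the genus and orientability argument already used in the proof of Lemma~\ref{L-pfnor-10}, and I would, if needed, deduce the connectedness assertions from Lemma~\ref{L-pfnor-8}.
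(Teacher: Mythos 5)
Your proposal is correct and follows essentially the same route as the paper: cut each orientable domain along the arcs provided by Lemma~\ref{L-pfnor-10}, observe that each cut raises $\sigma$ by $1$ (two new singular points of index $1$) and lowers $\beta$ by $1$ while leaving $\kappa$ and $\omega$ unchanged, and then conclude with Lemma~\ref{L-pfnor-6}. Your write-up is in fact more complete than the paper's, which merely asserts that each cut decreases $\beta$ by $1$, whereas you prove this key point via the separation argument (a two-sided simple closed curve in $M_1$ separates, so distinct boundary components of an orientable domain lie in distinct components of $\partial\mathcal{D}\cup\partial M_1$).
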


\begin{proof}
This is essentially an application of Lemma~\ref{L-pfnor-10}, with Lemma~\ref{L-parti-norm} in mind.  Each time, we apply one step in the proof of Lemma \ref{L-pfnor-10}  to one of the domains $D_i \in \cD$, $\beta$ decreases by $1$, and we create  two critical points $x_1$ and $x_2$ with $\iota (x_1) =\iota(x_2)=1$.
\end{proof}

\begin{lemma}\label{L-pfnor-20}
Let $\cD$ be a normal partition of $M_1$. Assume that some domain $D \in \cD$ is non-orientable. Then, there exists a connected component $\Gamma$ of $\partial D$, and a  piecewise $C^1$ path $\ell : [0,1] \to M_1$, such that $\ell(0), \ell(1) \in \Gamma$, with $\ell$ transversal to $\Gamma$ at its end points, $\ell((0,1)) \subset D$, and $D\setminus \ell$ is connected and orientable.
\end{lemma}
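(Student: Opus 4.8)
\emph{Plan.} The strategy is to first determine the homeomorphism type of $D$, then to exhibit the required arc on a model surface, and finally to smooth it, exactly in the spirit of the proof of Lemma~\ref{L-pfnor-10}. Since $\cD$ is normal, $D$ is a compact, piecewise $C^1$ surface with boundary (see Remark~\ref{R-parti-nor}), and here it is non-orientable. By the classification theorem \cite[Chap.~6]{GaXu}, $D$ is then homeomorphic to a sphere with $g\ge 1$ cross-caps and $q$ disks removed; note that $q\ge 1$ because $\partial D\neq\emptyset$ (an open proper subset of the compact surface $M_1$ has non-empty frontier). I will write $N_{g,q}$ for this model.

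The decisive point is that $g=1$, and I would prove it by cutting, as in Lemma~\ref{L-pfnor-10}. A non-orientable surface carries a one-sided simple closed curve; choose $c\subset D$ to be the core of one cross-cap, so that cutting $D$ along $c$ produces a surface carrying $g-1$ cross-caps. Since one-sidedness is detected by a regular neighborhood, $c$ is one-sided in $M_1$ as well; and every one-sided simple closed curve in the M\"obius strip is non-separating and isotopic to the core, so cutting $M_1$ along $c$ yields an orientable annulus $A$ (see \cite{Kre2014}). As $c\subset D\subset M_1$ and cutting is a local operation near $c$, the surface $D$ cut along $c$ embeds in $A$ as a subsurface, hence is orientable. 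If $g\ge 2$, this cut surface would still carry $g-1\ge 1$ cross-caps and therefore be non-orientable, a contradiction. Thus $g=1$, i.e. $D$ is homeomorphic to a M\"obius band with $q-1$ interior disks removed.

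On this model I would let $\Gamma$ be the (unique) boundary circle of the M\"obius band and $\ell_0$ a cross-fiber: an arc joining two points of $\Gamma$ and crossing the core transversally once. Cutting the M\"obius band along $\ell_0$ gives a disk, so placing the $q-1$ removed disks away from $\ell_0$, cutting $D$ along $\ell_0$ gives a disk with $q-1$ holes, which is planar, hence orientable, and connected. Transporting $\ell_0$ and $\Gamma$ through a homeomorphism onto $D$ yields a topological arc $\ell$ with both endpoints on one component $\Gamma$ of $\partial D$, with $\ell((0,1))\subset D$, and with $D\setminus\ell$ connected and orientable.

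Finally I would replace this topological arc by a piecewise $C^1$ one, transversal to $\Gamma$ at its endpoints, using the approximation argument already carried out in the Claim inside the proof of Lemma~\ref{L-pfnor-10}; a small isotopy supported near $\ell$ leaves the topology of the complement unchanged, so $D\setminus\ell$ stays connected and orientable. I expect the rigorous proof that $g=1$ to be the main obstacle: it rests on the two standard facts that the only one-sided simple closed curve in $M_1$ up to isotopy is the non-separating core and that cutting $M_1$ along it gives an orientable annulus, which is precisely the circle of ideas underlying the genus bound in Lemma~\ref{L-pfnor-10} and the reference \cite{Kre2014}.
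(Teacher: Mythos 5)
Your proof is correct, and its skeleton is the one the paper uses: normality makes $D$ a compact surface with boundary, the classification theorem \cite[Chap.~6]{GaXu} identifies it with a sphere with $g\ge 1$ cross-caps and $q$ disks removed, one proves $g=1$, one draws the cross-cut on the model (a M\"{o}bius band with $q-1$ holes) with endpoints on the boundary circle $\Gamma$ of the band, pulls it back through the homeomorphism, and smooths it into a piecewise $C^1$ arc transversal to $\Gamma$ by the approximation argument of Claim~\ref{C-pfnor-10}. Where you genuinely diverge is the key step $g=1$. The paper argues by complement connectivity: the model contains $g$ pairwise disjoint simple closed curves whose union does not disconnect it, Lemma~\ref{L-pfnor-8} upgrades ``does not disconnect $D$'' to ``does not disconnect $M_1$'', and the genus-one bound for $M_1$ forces $g=1$. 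You instead argue by cutting: the cross-cap core $c$ is one-sided in $D$, hence one-sided in $M_1$ (one-sidedness is read off a regular neighborhood), every one-sided simple closed curve in $M_1$ is isotopic to the core, so cutting $M_1$ along $c$ gives an orientable annulus in which the cut-open $D$ embeds; orientability of the cut-open $D$ then kills the remaining $g-1$ cross-caps. Your route makes no use of Lemma~\ref{L-pfnor-8}, but imports the isotopy classification of one-sided curves in the M\"{o}bius band, a standard fact of comparable depth (alternatively, one can get your annulus from an Euler characteristic and boundary-component count, since a one-sided curve never separates); the paper's route stays inside its own toolbox, since Lemma~\ref{L-pfnor-8} is proved there and reused in Lemma~\ref{L-pfnor-10} and in the proof of the main theorem. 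Two small points in your favor: you state the correct topology of the cut model --- a disk with $q-1$ holes, connected and orientable, which is exactly what the lemma asserts --- whereas the paper's proof says the cut complement is simply connected, which is accurate only when $q=1$; and you note explicitly that the final smoothing should be a small isotopy supported near the arc, so that connectedness and orientability of $D\setminus\ell$ survive, a point the paper leaves implicit.
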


\begin{proof}
The partition $\cD$ being normal, the domain $D$ is a non-orientable surface with boundary and hence, there exists a homeomorphism $f : D \to \Sigma_{1,c,q}$,  one of the standard non-orientable surfaces, a sphere with $c \ge 1$ cross-caps attached, and $q$ discs removed ($c$ is the  genus\footnote{ For an orientable surface without boundary, the genus is defined as the number of handles attached to the sphere. For a non-orientable surface without boundary, the genus is defined as the number of cross-caps attached to the sphere.} of $\Sigma_{1,c}$,  and $q$ is the number of boundary components of $D$). The surface $\Sigma_{1,c,q}$ contains $c$ pairwise disjoint simple closed curves whose union does not disconnect the surface, each cross-cap contributes for one such curve. Since the genus of $M_1$ is $1$ ($M_1$ is a sphere with one cross-cap attached, and one disk removed), Lemma~\ref{L-pfnor-8} implies that $c=1$. It follows that $f: D \to \Sigma_{1,1,q}$, a M\"{o}bius strip $\Sigma_{1,1}$ with $q$ disks removed. Denote by $\Gamma = f^{-1}(\partial \Sigma_{1,1})$, the inverse image of the boundary of the M\"{o}bius strip. It is easy to cut $\Sigma_{1,1,q}$ by some path $\ell_0$, with end points on $\partial \Sigma_{1,1}$, in such a way that $\Sigma_{1,1,q} \sm \ell_0$ is simply-connected. The path $\ell_1$ is given by $\ell_1 = f^{-1}(\ell_0)$.   In order to finish the proof, it suffices to approximate $\ell_1$ by a piecewise $C^1$ path $\ell$ transversal to $\partial D$. For this purpose, we can use the same arguments as in the proof of Claim~\ref{C-pfnor-10}.
\end{proof}\medskip

\textbf{Proof of Theorem~\ref{T-euler-nor}}\\
 Let $\cD$ be a regular partition of the M\"{o}bius strip. Let $\delta(\cD,M_1)$ be defined in \eqref{E-parti-del}. By Lemma~\ref{L-parti-norm}, we may assume that $\cD$ is a normal partition, without changing the value of $\delta(\cD,M_1)$. \smallskip

\noid Assume that all the domains in $\cD$ are orientable. Applying Lemma~\ref{L-pfnor-12}, we conclude that $\delta(\cD,M_1) = 0$, and the theorem is proved in this case.\smallskip

\noid Assume that (at least) one of the domains in $\cD$, call it $D_1$ is non-orientable. We claim that $D_1$ is actually the only non-orientable domain in $\cD$. Indeed, assume that there is another non-orientable domain $D_2$. By Lemma~\ref{L-parti-norm}, both domains are surfaces with boundary, with genus $1$. Each $D_i, i=1,2$, contains a simple closed curve $\gamma_i$ which does not disconnect $D_i$. Since $D_1 \cap D_2 = \emptyset$, we would have two disjoint simple closed curves $\gamma_1, \gamma_2$. Applying Lemma~\ref{L-pfnor-8} twice, we obtain that $\gamma_1 \cup \gamma_2$ does not disconnect $M_1$.  This is a contradiction since $M_1$ has genus $1$. \smallskip

Apply Lemma~\ref{L-pfnor-20}: $D_1$ is homeomorphic to a M\"{o}bius trip with $q$ disks removed, there is a path $\ell$ which does not disconnect $D_1$, and such that $D_1\sm \ell$ is orientable. In doing so,  using the fact that $\ell$ is piecewise $C^1$ and transversal to $\Gamma$, we obtain a regular partition $\cD' = \set{D_1\sm\ell, D_2,...,D_{\kappa(\cD)}}$ of $M_1$. Since all the domains of $\cD'$ are orientable, by the preceding argument, we have $\delta(\cD',M_1) =0$. On the other-hand, we have $\omega(\cD',M_1) = 0$, $\beta(\cD',M_1) = \beta(\cD,M_1)$, $\kappa(\cD',M_1) = \kappa(\cD,M_1)$, and since we have the extra arc $\ell$ in $\bD'$, whose end points are singular points of index $\iota = 1$, we have $\sigma(\cD',M_1) = \sigma(\cD,M_1) +1$. It follows that,
\begin{equation*}
0 = \beta(\cD',M_1) + \sigma(\cD',M_1) - \kappa(\cD',M_1) = \beta(\cD,M_1) + \sigma(\cD,M_1) +1 - \kappa(\cD,M_1)\,,
\end{equation*}
and hence $\delta(\cD,M_1) = 0$.  The proof of Theorem~\ref{T-euler-nor} is now complete.\hfill \qed\
\vspace{1cm}

Figure~\ref{F-em-1} displays the typical nodal patterns of the Dirichlet eigenfunction
\begin{equation}\label{E-phi}
\Phi_{\beta,\theta}(x,y) = \cos\theta \sin(2x)\sin(3y) + \sin\theta \sin(3x)\sin(2y+\beta)\,,
\end{equation}
when $\beta \in (0,\pic)$ is fixed and $\theta \in (0,\pib)$. As explained in \cite[Section~5.4]{BHK}, there is a dramatic change in the nodal pattern when $\theta$ passes some value $\theta(\beta)$ (this value is precisely defined in \cite[Eq. (5.26)]{BHK}). For $0 <\theta \le \theta(\beta)$ the nodal domains are all orientable; for $\theta(\beta) < \theta < \pib$, there is one non-orientable nodal domain, homeomorphic to a M\"{o}bius strip (the domain in green). When $\theta = \theta(\beta)$, the nodal domain in green is not a surface with boundary due to the singular point at the boundary.\medskip

\begin{figure}[!ht]
  \centering
  \includegraphics[scale=0.35]{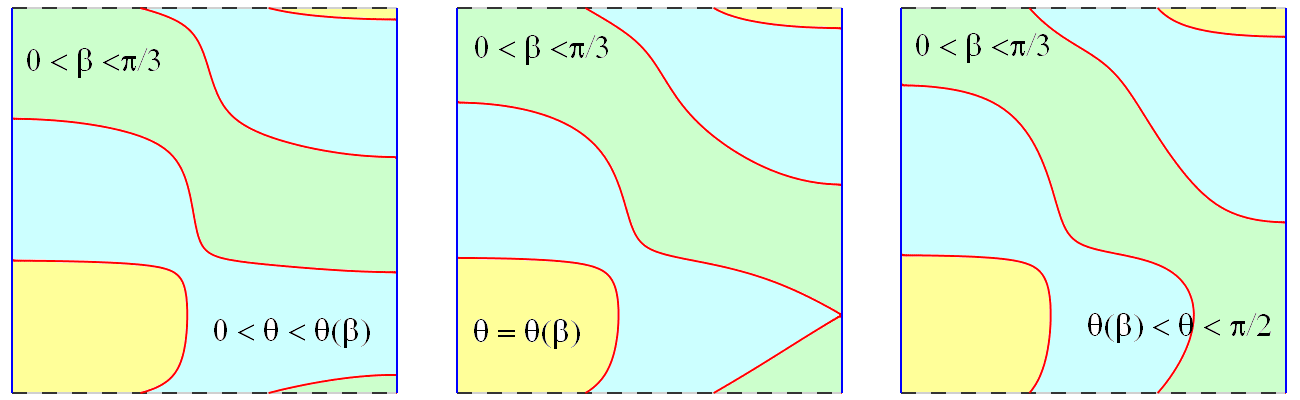}\\
  \caption{Example~1}\label{F-em-1}
\end{figure}

Figure~\ref{F-em-2} displays the nodal patterns of the eigenfunctions $\sin(3x)$ (left) and $\sin(5x)$ (right), on a 3D representation\footnote{We work with the flat metric on the M\"{o}bius strip, and use an embedding into $\R^3$ which is not isometric.} of the M\"{o}bius strip. The nodal domains are colored according to sign. In both cases, there is one nodal domain which is homeomorphic to a M\"{o}bius strip. The other nodal domains are cylinders, and hence orientable though not simply-connected.\medskip

\begin{figure}[h]
  \centering
  \includegraphics[scale=0.35]{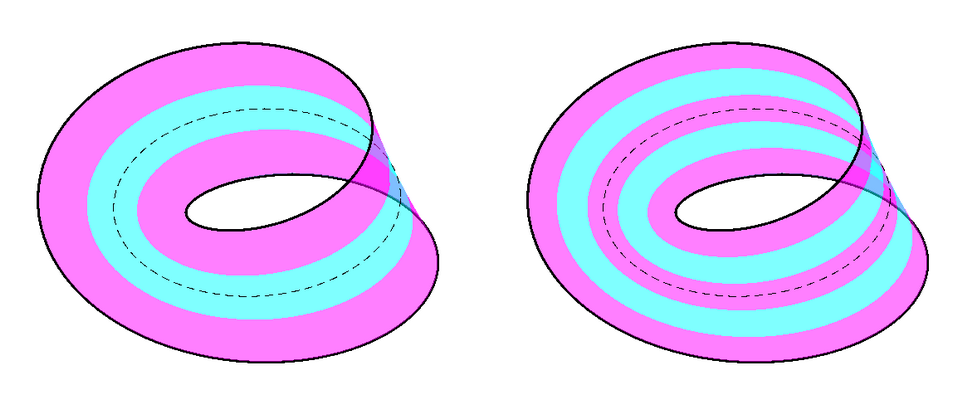}\\
  \caption{Example~2}\label{F-em-2}
\end{figure}

Figure~\ref{F-em-3}\,(A) displays the nodal patterns of the eigenfunction \eqref{E-phi}, with $\beta=\pic$ and $\pid < \theta \pib$. This is explained in \cite[Section~5.5]{BHK}. Notice that the nodal domains labeled (2) and (3) are not surfaces with boundary due to one of the singular points. There are two interior singular points (with $\nu=4$), and four boundary singular points (with $\rho=1$).
\smallskip

Figure~\ref{F-em-3}\,(B) displays the nodal pattern of the function
\begin{equation*}
\cos\theta \sin(x) \cos(6y) + \sin\theta \sin(6x) \cos(y),
\end{equation*}
with $\theta = 0.4\,\pi$. There is one non-orientable domain (colored in pink), homeomorphic to a M\"{o}bius strip with two holes (nodal domains colored in blue or in green). There is another disk-like nodal domain (colored in yellow).

\begin{figure}[h]
\centering
\begin{subfigure}[t]{.35\textwidth}
\centering
\includegraphics[width=\linewidth]{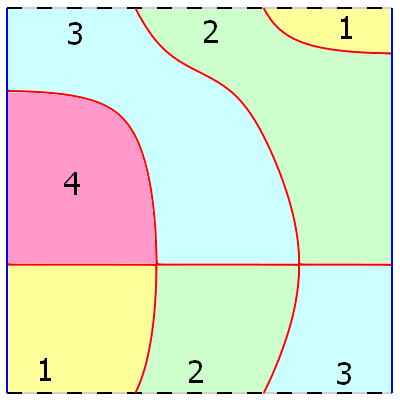}
\caption{}
\end{subfigure}
\begin{subfigure}[t]{.35\textwidth}
\centering
\includegraphics[width=\linewidth]{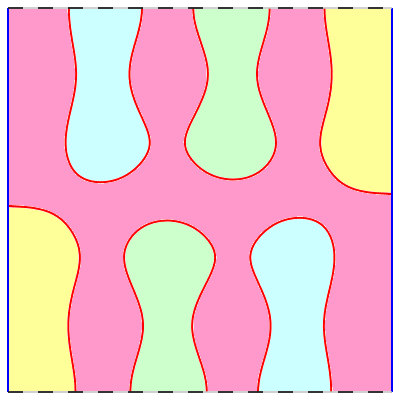}
\caption{}
\end{subfigure}
\caption{Example~3}\label{F-em-3}
\end{figure}

\begin{cylinder} %%version 
\section{Euler-type formula for the M\"{o}bius strip via the covering cylinder}\label{S-cyl}

In this section, given a (nodal) partition $\cD = \set{D_j}_{j=1}^k$ of $M_1$, we aim at computing $\delta(\cD,M_1)$ by computing $\delta(\cD^{*},C_1)$, where $C_1$ is the cylinder which covers $M_1$, and $\cD^{*}$ the inverse image $\pi^{-1}(\cD)$, where $\pi$ is the projection map from $ C_1$ onto  $M_1$.\medskip

{\clr I am thinking in terms of nodal partitions. One could use a normal partition.\medskip}

According to \cite[Theorem~5.3.29, p.~164]{BeGo1988}, we have the following characterization of orientability in the present situation.

\begin{proposition}\label{P-cyl-2}
A nodal domain $D \in \cD$ is orientable if and only if $\pi^{-1}(D)$ has two connected components, and non-orientable if and only if $\pi^{-1}(D)$ is connected.
\end{proposition}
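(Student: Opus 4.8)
The plan is to recognize $\pi : C_1 \to M_1$ as the \emph{orientation double cover} of $M_1$, and then to observe that its restriction over a nodal domain $D$ is nothing but the orientation double cover of $D$ itself. Once this identification is made, the proposition reduces immediately to the standard fact -- which is the content of the result quoted from \cite{BeGo1988} -- that the orientation cover of a connected surface is connected exactly when the surface is non-orientable.

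First I would record the structure of the cover. The map $\pi$ is a $2$-fold covering whose nontrivial deck transformation $\tau : C_1 \to C_1$ is a fixed-point-free involution with $M_1 = C_1/\tau$, and $\tau$ reverses the canonical orientation of $C_1$ (this is precisely what forces the quotient $M_1$ to be non-orientable). Since $\pi$ is a covering map, its restriction $\pi : \pi^{-1}(D) \to D$ is again a covering map, of degree $2$, and $\pi^{-1}(D)$ is a $\tau$-invariant open subset of $C_1$. Consequently $\pi^{-1}(D)$ has either one or two connected components, and when it has two, $\tau$ interchanges them while $\pi$ maps each homeomorphically onto $D$.

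The key step is the identification of $\pi : \pi^{-1}(D) \to D$ with the orientation double cover of $D$. Because orientability is a local notion, the fiber of $C_1$ over a point $x$ can be described as the pair of local orientations of the tangent plane $T_x M_1$; for $x \in D$ this is the same as the pair of local orientations of $T_x D$, since $D$ is open in $M_1$. Thus the restricted cover classifies loops in $D$ according to whether they preserve or reverse a local orientation, which is exactly the defining property of the orientation cover of $D$. With this identification, the cited theorem yields $\pi^{-1}(D)$ disconnected $\iff D$ orientable and $\pi^{-1}(D)$ connected $\iff D$ non-orientable, which is the claim.

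Should one prefer a self-contained argument, the two implications can be proved directly from the involution $\tau$. If $\pi^{-1}(D) = D' \sqcup \tau(D')$ splits, then $\pi|_{D'} : D' \to D$ is a homeomorphism onto $D$ with $D' \subset C_1$ orientable, so $D$ is orientable. Conversely, an orientation of $D$ selects at each $x \in D$ one of the two local orientations, i.e.\ a continuous section $s : D \to \pi^{-1}(D)$; then $s(D)$ and $\tau(s(D))$ are disjoint (as $\tau$ swaps the two local orientations) and together exhaust $\pi^{-1}(D)$, so the cover splits. The main obstacle, and really the only delicate point, is the identification in the previous paragraph, together with the boundary behaviour: a nodal domain may meet $\partial M_1$, so strictly one reads orientability off the interior $\inte D$ and uses that $\pi^{-1}(\inte D)$ is dense in $\pi^{-1}(D)$ and connected if and only if the latter is -- which holds because the boundary set is a piecewise $C^1$, hence topologically thin, subset.
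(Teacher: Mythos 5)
Your proposal is correct and takes essentially the same route as the paper: the paper establishes this proposition simply by appealing to \cite[Theorem~5.3.29, p.~164]{BeGo1988}, i.e.\ to the fact that the orientation double cover of a connected surface is connected exactly when the surface is non-orientable, which is precisely the fact your argument reduces to. What you add is the explicit identification of the restricted cover $\pi^{-1}(D)\to D$ with the orientation double cover of $D$ itself (together with the self-contained deck-involution variant), details which the paper leaves implicit in the citation.
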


Let $(x_0,y_0)$ and $(\pi - x_0,y_0+\pi)$ be two points in $\pi^{-1}(D)$ which correspond to some point in $D$. Without loss of generality, we may assume that $y_0 = 0$. Since $\pi^{-1}(D)$ is connected, we can join these points by a simple path contained in $\pi^{-1}(D)$. By projection, we obtain a closed loop at $(x_0,0)$, and we can modify it, if necessary, so that it is simple. \medskip

\begin{lemma} \label{L-cyl-2} The partition $\cD$ contains at most one non-orientable nodal domain.
\end{lemma}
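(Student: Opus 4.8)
The plan is to argue by contradiction, exploiting the free, orientation-reversing involution $\tau$ of the cylinder $C_1$ whose quotient is $M_1$ (so $\pi\circ\tau=\pi$, and $\tau$ interchanges the two points of each fibre). Suppose $\cD$ contained two distinct non-orientable nodal domains $D_1$ and $D_2$. By Proposition~\ref{P-cyl-2}, each preimage $\pi^{-1}(D_i)$ is connected, and since $D_1\cap D_2=\emptyset$ the two open sets $\pi^{-1}(D_1)$ and $\pi^{-1}(D_2)$ are disjoint.

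Next I would produce in each $D_i$, exactly as in the construction carried out just before the statement, a simple closed curve $\gamma_i\subset D_i$ that does not lift to a loop: pick $p_i\in\pi^{-1}(D_i)$ and join $p_i$ to $\tau(p_i)$ by a simple path inside the connected set $\pi^{-1}(D_i)$; its projection is a loop in $D_i$ that I make simple. Because $\gamma_i$ does not lift to a loop, its full preimage $\widetilde{\gamma}_i:=\pi^{-1}(\gamma_i)$ is a single connected simple closed curve in $C_1$, it is $\tau$-invariant, and it is contained in $\pi^{-1}(D_i)$; in particular $\widetilde{\gamma}_1$ and $\widetilde{\gamma}_2$ are disjoint.

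The geometric heart of the argument is two claims about a curve of this type. First, $\widetilde{\gamma}_i$ separates $C_1$: since $\gamma_i$ is homotopically nontrivial in $M_1$ (its class is not in the index-two subgroup $\pi_*\pi_1(C_1)$), $\widetilde{\gamma}_i$ is essential in the cylinder $C_1$, hence core-parallel, hence cuts $C_1$ into two pieces $U_i$ and $V_i$. Second, and this is the step I expect to be the main obstacle, the involution $\tau$ interchanges $U_i$ and $V_i$. This encodes the one-sidedness of $\gamma_i$ in $M_1$: a regular neighborhood of $\gamma_i$ is a M\"{o}bius strip, whose orientation double cover (the preimage neighborhood of $\widetilde{\gamma}_i$) is an annulus on which $\tau$ acts as the nontrivial deck transformation; since the boundary of the M\"{o}bius strip is a single circle, its two preimage boundary circles, which lie on opposite sides of $\widetilde{\gamma}_i$, are swapped by $\tau$, so $\tau(U_i)=V_i$.

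Granting these, the contradiction is immediate. The curve $\widetilde{\gamma}_2$ is connected and disjoint from $\widetilde{\gamma}_1$, hence lies entirely in one component of $C_1\setminus\widetilde{\gamma}_1$, say $\widetilde{\gamma}_2\subset U_1$. Applying $\tau$ and using $\tau(\widetilde{\gamma}_2)=\widetilde{\gamma}_2$ gives $\widetilde{\gamma}_2\subset\tau(U_1)=V_1$, which is impossible since $U_1\cap V_1=\emptyset$ and $\widetilde{\gamma}_2\neq\emptyset$. Hence at most one non-orientable domain can occur. (Alternatively, one could bypass the cylinder and reproduce the genus argument from the proof of Theorem~\ref{T-euler-nor} via Lemma~\ref{L-pfnor-8}, but the equivariant separation argument above is the natural one in the present setting.)
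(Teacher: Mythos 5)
Your proof is correct, and it shares the paper's starting construction but not its key step. The paper's own proof builds exactly the same loops---project a simple path joining $p_i$ to $\tau(p_i)$ inside the connected preimage $\pi^{-1}(D_i)$---and then simply asserts that the two projected loops ``must meet in $M_1$'', contradicting $D_1\cap D_2=\emptyset$; that intersection fact (two one-sided simple closed curves in a M\"{o}bius strip always intersect, e.g.\ because both represent the core class in $H_1(M_1;\Z/2)$, whose mod~$2$ self-intersection is $1$) is left unproved there. You instead derive the contradiction upstairs in the cylinder: the full preimages $\widetilde{\gamma}_1,\widetilde{\gamma}_2$ are disjoint $\tau$-invariant simple closed curves, each separates the annulus, and $\tau$ swaps the two sides of each one because the two lifts of the single boundary circle of a M\"{o}bius-band neighborhood are exchanged by the deck involution (they cannot be preserved, since $\tau$ is free and $\pi$ is injective on each lift); a nonempty $\tau$-invariant connected set disjoint from $\widetilde{\gamma}_1$ then cannot exist. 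This is a genuinely different, and more self-contained, mechanism: it stays entirely inside the covering-space framework of this section, uses only Jordan-type separation in the annulus plus freeness of the deck transformation, and it actually \emph{proves} the step the paper dismisses as ``easy to see''. Your parenthetical alternative---two disjoint non-separating curves and Lemma~\ref{L-pfnor-8} versus the genus of $M_1$---is precisely the route taken in the main text's proof of Theorem~\ref{T-euler-nor}. Two small points, common to your write-up and the paper's, could be made explicit: after smoothing the projected loop to a simple one, one must check that some simple sub-loop is still one-sided (sum the classes in $H_1(M_1;\Z/2)$ of the simple pieces), and the two local sides of $\widetilde{\gamma}_i$ lie in distinct global components of $C_1\setminus\widetilde{\gamma}_i$ (immediate once $\widetilde{\gamma}_i$ is isotoped to the core circle).
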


\begin{proof}
Indeed, assume that there are two such nodal domains $D_0$, $D_1$. Construct two paths as above, one from $(x_0,0)$ to $(\pi - x_0,\pi)$ in $\pi^{-1}(D_0)$, another from $(x_1,y_1)$ to $(\pi - x_1,y_1+\pi)$ in $\pi^{-1}(D_1)$.  It is easy to see that the projections of these paths must meet in $M_1$, which contradicts the fact that $D_0$ and $D_1$ are disjoint.
\end{proof}

It now remains to look at $\cD^{* }:=\pi^{-1}(\cD)$ in the annulus $C_1$ (where we can apply the Euclidean Euler formula, Proposition~\ref{P-euler-or}), and to consider the two possible cases:
\begin{itemize}
\item[(i)] $\cD$ contains only orientable domains,
\item[] and
\item [(ii)] $\cD$ contains precisely one non-orientable domain.
\end{itemize}\medskip

\emph{Case (i).} Let's look at $\cD^{*}$.  If there is a simple path in $\bD$ linking $\set{x=0}$ and $\set{x=\pi}$, then we can immediately conclude that $\omega(\cD,M_1)=0$, and $\delta(\cD,M_1) =0$, and we are done (see Lemma~\ref{L-pfnor-2}). \smallskip

Assume that there is no such path. Since we are in a Euclidean framework, with the obvious notation, we have
\begin{equation*}
0 = 1 + \beta^{*} + \sigma^{*} - \kappa^{*}.
\end{equation*}

From our assumptions, we have $\kappa^{*} = 2 \kappa$. Since $\pi$ is a covering map, we have $\sigma^{*} = 2 \sigma$. We observe that $\sigma$ is an integer. Indeed, we have the relation \cite[Eq. (2.11)]{BH0},
\begin{equation*}
\chi(M_1) + \sigma(\cD,M_1) = \sum_{j=1}^k \chi(\overline{D_j})\,.
\end{equation*}

Define $\beta_i$ as the number of connected components of $\bD$ which are not $\bD$-connected to $\partial M_1$, and $\beta_i^{*}$ as the number of connected components of $\bD^*$ which are not $\bD^*$-connected to $\partial C_1$.\medskip

Clearly, $\beta = \beta_i$. Since all domains are oriented, we have $\beta_i^* = 2 \beta_i$. The two connected components of $\partial C_1$ might be $\bD^*$-connected or not. Accordingly, $\beta^* = \beta_i^* - 1$ or $\beta_i^*$. The above equations imply that $\beta^*$ is odd, and hence we must have $\beta^* = 2 \beta_i - 1$. Finally, we obtain that $0 = \beta + \sigma - \kappa$. Since $\omega = 0$, the Euler-type formula for $\cD^*$ can be written as,
\begin{equation*}
0 = 1 + 2\beta -1 + 2\sigma - 2\kappa \,.
\end{equation*}
Noting that $\omega(\cD,M_1)=0$)  in the case at hand, we conclude that
\begin{equation*}
0 = \beta + \sigma - \kappa.
\end{equation*}
\smallskip

\emph{Case (ii).} In this case, we have $\omega(\cD,M_1) = 1$, $\kappa^{*} = 2 \kappa -1$ (use Lemma~\ref{L-cyl-2}), $\sigma^{*} = 2\sigma$. \medskip

\begin{lemma}\label{L-cyl-12}
Assume that $\cD$ has one (and hence only one) non-orientable domain. Then, $\beta^{*} = 2\beta$
\end{lemma}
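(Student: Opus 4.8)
The plan is to reduce the claim to a single topological fact about the double cover $\pi : C_1 \to M_1$, and then to prove that fact by the same ``two disjoint one-sided curves'' argument already exploited in the paper. First I would set up the bookkeeping. Write $G := \bD \cup \partial M_1$ and $G^{*} := \pi^{-1}(G) = \bD^{*} \cup \partial C_1$. Since $\partial M_1$ is connected while $\partial C_1$ has two components, the definitions give $\beta = b_0(G) - 1$ and $\beta^{*} = b_0(G^{*}) - 2$. Because $\pi$ is a double cover and each connected component $C$ of $G$ is a path-connected piecewise $C^1$ subcomplex, the restricted cover $\pi^{-1}(C) \to C$ is classified by the composite $\pi_1(C) \to \pi_1(M_1) \to \Z/2$; thus $\pi^{-1}(C)$ is connected exactly when $C$ carries an orientation-reversing loop, and has two components otherwise. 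Letting $n_1$ be the number of components of $G$ with connected preimage and counting the components of $G^{*}$ componentwise gives $b_0(G^{*}) = 2\, b_0(G) - n_1$, hence
\[
\beta^{*} = 2\, b_0(G) - n_1 - 2 = 2\beta - n_1 .
\]
So the lemma is \emph{equivalent} to the vanishing $n_1 = 0$: no connected component of $\bD \cup \partial M_1$ carries an orientation-reversing loop.

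The heart of the matter is this vanishing, and I would argue it by contradiction. Suppose some component $C$ of $G$ carries an orientation-reversing loop. A regular neighborhood $N$ of $C$ in $M_1$ is then a non-orientable surface with boundary, so by Remarks~\ref{R-parti-or}(ii) it contains a M\"obius strip, whose core is an embedded orientation-reversing simple closed curve $\gamma \subset N$. On the other hand, the hypothesis of Case~(ii) supplies a non-orientable domain $D_0 \in \cD$; by Lemma~\ref{L-pfnor-20} it is a M\"obius strip with $q$ discs removed, and the core of that M\"obius strip is an embedded orientation-reversing simple closed curve $\delta$ in the interior of $D_0$. Since $C \subset \bD \cup \partial M_1$ is disjoint from the open set $D_0$, while $\delta$ lies in a compact part of the interior of $D_0$, I can take the neighborhood $N$ small enough that $\gamma$ and $\delta$ are disjoint.

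It then remains to exclude two disjoint embedded orientation-reversing simple closed curves in $M_1$, which is exactly the genus-$1$ phenomenon used earlier (cf. Lemma~\ref{L-pfnor-8} and the proof of Theorem~\ref{T-euler-nor}). Concretely, cutting $M_1$ along the one-sided curve $\gamma$ produces a connected, orientable surface with $\chi = 0$ and two boundary circles, i.e.\ an \emph{annulus}. The curve $\delta$, being disjoint from $\gamma$, survives unchanged in this annulus, where it admits a two-sided annular neighborhood; that neighborhood is simultaneously a neighborhood of $\delta$ in $M_1$, so $\delta$ would be orientation-preserving in $M_1$, contradicting its choice. This contradiction proves $n_1 = 0$, and the displayed identity yields $\beta^{*} = 2\beta$.

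The step I expect to be the main obstacle is the passage from ``$C$ carries an orientation-reversing loop'' to ``there is an embedded orientation-reversing simple closed curve near $C$ that is disjoint from $D_0$'': this requires a regular-neighborhood/general-position argument together with a careful check that $\gamma$ can be kept away from the core $\delta$ of $D_0$. The impossibility of two disjoint one-sided curves is then routine given the paper's tools, and the component-counting identity $\beta^{*} = 2\beta - n_1$ is purely formal.
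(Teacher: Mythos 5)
Your proof is correct, and it takes a genuinely different route from the paper's own proof of this lemma, which is explicitly labelled ``tentative'' and left as an incomplete sketch. The paper first normalizes the partition so that the non-orientable domain becomes a M\"obius strip with $q$ disks removed, and then counts boundary components by hand, splitting into cases according to whether the outer curve $\Gamma$ of that domain is $\bD$-connected to $\partial M_1$; its decisive step is the same obstruction you isolate, namely that $\pi^{-1}(\Gamma)$ must have two components because a connected lift would be incompatible with the lift of a one-sided curve inside the non-orientable domain. You replace that bookkeeping by the uniform covering-space identity $\beta^{*} = 2\beta - n_1$, where $n_1$ counts the components of $\bD \cup \partial M_1$ carrying orientation-reversing loops, and then prove $n_1 = 0$ by producing two disjoint embedded one-sided simple closed curves and cutting $M_1$ along one of them; this buys a complete argument with no normalization and no case analysis, and it also explains Case~(i) of the paper's discussion, where exactly one component lifts connectedly and $\beta^{*} = 2\beta - 1$. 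Two details should be repaired, though neither is a genuine gap. First, Lemma~\ref{L-pfnor-20} assumes a \emph{normal} partition, which you have not arranged; but all you need is an embedded one-sided simple closed curve $\delta$ compactly contained in the non-orientable domain $D_0$, and this follows from Remarks~\ref{R-parti-or}\,(ii) applied to a compact regular neighborhood of an orientation-reversing loop in $D_0$. Second, the orientability of the surface obtained by cutting $M_1$ along $\gamma$ is not automatic and should be a conclusion rather than an assertion (cutting a Klein bottle along a one-sided curve can produce a M\"obius band); the correct deduction is that the cut surface is connected (a one-sided curve cannot separate, since its regular neighborhood minus the curve is connected), compact, with $\chi = 0$ and two boundary circles, and the classification theorem then forces it to be an annulus, because the non-orientable alternative would require zero cross-caps. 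With these two adjustments your argument is complete and, unlike the paper's sketch, self-contained.
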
%

Assuming the claim is true, the Euclidean Euler formula,
\begin{equation*}
0 = 1 + \beta^{*} + \sigma^{*} - \kappa^{*}\,,
\end{equation*}
yields,
\begin{equation*}
0 = 1 + 2\beta + 2\sigma - 2\kappa + 1,
\end{equation*}
i.e.
\begin{equation*}
0 = 1 + \beta + \sigma - \kappa.
\end{equation*}

\begin{proof}[Tentative proof of the Lemma]
To compare $\beta$ and $\beta^*$, we first look at the components of the boundary in $\partial \cD\cup \partial M_1$. There is of course at least one such component. Hence $\beta (\mathcal D)$ is the number of components of $\partial \cD$ which are not $\bD$-connected to $\partial M_1$.\smallskip

As in Lemma~\ref{L-parti-norm}, we can transform $\cD$ to a partition $\widetilde{\cD}_{\varepsilon}$ such that $\delta(\cD,M_1) = \delta(\widetilde{\cD}_{\varepsilon},M_1)$, with the domain $D$ becoming a non-orientable domain $D_{\varepsilon}$ which is a surface with boundary homeomorphic to a M\"{o}bius strip $\Sigma$, with $q$ disks removed. Define $\Gamma = \partial \Sigma$.\medskip

There are two cases depending on whether $\Gamma$ is $\bD$-connected with $\partial M_1$ or not.  \smallskip

Let $\beta'(\mathcal D)$ be the number of components of $\partial \cD$ which are distinct from $\Gamma$, and from the components intersecting $\partial D_1$.

At this stage, we have
$$
\beta(\mathcal D) = \beta'(\mathcal D) + q-1  \,,
$$
in the first case, and
$$
 \beta(\mathcal D) = \beta'(\mathcal D) + q\,,
$$
in the second case.\medskip

When going to the covering $\pi^{-1} (\Gamma)$ should consist of two components. If not, we would get a contradiction with the existence of $\pi^{-1} (\gamma)$ where $\gamma$ is a non trivial path in $D$. The number of the connected components  of $\pi^{-1} (\partial D \sm \Gamma)$ is two-times the cardinal of the components of $\partial D \sm \Gamma$.\medskip

We finally look at the components of $\mathcal D^*$,  we have $b_0^* =2$ and the number of components of $\partial \mathcal C_1 \cup \bD^*)$ is also $2$ (again due to the existence of $\pi^{-1} (\gamma)$).
\end{proof}\medskip
\end{cylinder}%%version 

\begin{projective}%%version 
\section{Euler-type formula for partitions of the real projective space $\rP$, or the M\"{o}bius strip $M_1$, whose singular set is empty}\label{S-mtop}~

\begin{lemma}\label{L-mtop-4}
Let $\cD$ a regular partition of $\rP$, with empty singular set, $\cS(\cD) = \emptyset$. Then, $\delta(\cD,\rP) = 0$. More precisely, the boundary set $\bD$ is a disjoint union of circles $\gamma_1 \bigsqcup \cdots \bigsqcup \gamma_{\ell}$. If one of the circles, say $\gamma_1$ does not separate $\rP$, then the others, if any, separate $\rP$, and $\omega(\cD,\rP)=0$. If they all separate $\rP$, then $\omega(\cD,\rP) = 1$.
\end{lemma}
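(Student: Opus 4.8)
The plan is to reduce to the sphere formula by passing to the orientation double cover $\pi : \bS^2 \to \rP$. First I record the reductions forced by $\cS(\cD)=\emptyset$: the boundary set $\bD$ is then a compact $1$-manifold with no singular points, hence a disjoint union of embedded circles $\gamma_1 \sqcup \cdots \sqcup \gamma_\ell$, so $\sigma(\cD,\rP)=0$ and, since $\partial\rP=\emptyset$, $\beta(\cD,\rP)=b_0(\bD)=\ell$. Thus $\delta(\cD,\rP)=\omega(\cD,\rP)+\ell-\kappa(\cD,\rP)$, and the statement amounts to proving $\kappa=\omega+\ell$ together with the announced value of $\omega$. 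I next classify each $\gamma_i$: in $\rP$ a simple closed curve is either null-homotopic, equivalently $2$-sided and \emph{separating} (Euler characteristic forces it to bound a disk on one side and a M\"obius strip on the other), or it represents the generator of $\pi_1(\rP)=\Z/2\Z$, equivalently $1$-sided and \emph{non-separating}, in which case cutting $\rP$ along it yields a disk. If some $\gamma_1$ is non-separating, then $\rP\sm\gamma_1$ is an open disk containing all the remaining curves; by the Jordan curve theorem each of those bounds a sub-disk, hence is null-homotopic and separating. This establishes the ``at most one non-separating curve'' assertion.

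For the counting I would pass to $\cD^{*}:=\pi^{-1}(\cD)$ on $\bS^2$. Since $\pi$ is a local homeomorphism, $\cD^{*}$ is a regular partition with $\cS(\cD^{*})=\emptyset$, so Proposition~\ref{P-euler-or} gives $\kappa^{*}=1+\beta^{*}$. The cover acts on the pieces as follows: a separating (null-homotopic) curve lifts to two disjoint circles, a non-separating curve lifts to a single circle double-covering it; and, by the orientation-cover characterization \cite{BeGo1988}, a domain $D$ is orientable iff $\pi^{-1}(D)$ has two components and non-orientable iff $\pi^{-1}(D)$ is connected. Writing $o$ and $n$ for the numbers of orientable and non-orientable domains, we thus have $\kappa^{*}=2o+n$, while $\beta^{*}=b_0(\bD^{*})$ equals the total number of lifted circles. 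I also need that \emph{at most one domain is non-orientable}: a non-orientable $D$ contains a M\"obius strip (see Remark~\ref{R-parti-or}) whose core is a $1$-sided, hence non-separating, simple closed curve of $\rP$, so two non-orientable domains would produce two \emph{disjoint} non-separating curves, contradicting the previous paragraph. Hence $n\le 1$.

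It then remains to split into the two cases. If one curve is non-separating, every domain lies in the open disk $\rP\sm\gamma_1$ and is therefore orientable, so $\omega=0$ and $\kappa^{*}=2\kappa$; here $\beta^{*}=2(\ell-1)+1=2\ell-1$, and Proposition~\ref{P-euler-or} gives $2\kappa=1+(2\ell-1)=2\ell$, i.e. $\kappa=\ell=\omega+\ell$. If every curve separates, then $\beta^{*}=2\ell$ and $\kappa^{*}=2o+n=1+2\ell$; the right-hand side is odd, so $n=0$ is impossible, and with $n\le 1$ we conclude $n=1$, whence $\omega=1$, $o=\ell$, and $\kappa=o+n=\ell+1=\omega+\ell$. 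In both cases $\delta(\cD,\rP)=\omega+\ell-\kappa=0$, and the values of $\omega$ are exactly the announced ones. The main obstacle I anticipate is the orientability bookkeeping, namely justifying that $\pi^{-1}(D)$ is connected precisely for non-orientable $D$ and combining the ``at most one non-orientable domain'' bound with the parity of $\kappa^{*}$ in the all-separating case to force $n=1$; the purely combinatorial circle counts are then routine.
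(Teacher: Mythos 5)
Your proposal is correct, but its computational core is genuinely different from the paper's. The paper makes the same case split (some boundary circle non-separating versus all separating) but then counts \emph{downstairs}: in the first case it observes that $\rP\sm\gamma_1$ is an open disk containing all the remaining circles and reduces directly to Proposition~\ref{P-euler-or}; in the second case it lifts only $\gamma_1$, decomposes $\bS^2$ minus the two lifted circles into two disks and an antipodally invariant annulus, concludes that $\gamma_1$ cuts $\rP$ into a disk and a M\"{o}bius strip, and then reads off $\omega=1$, $\beta=\ell$, $\kappa=\ell+1$ from that geometric picture (these counts, in particular the existence of the non-orientable domain, are asserted rather than derived). You instead lift the \emph{whole} partition to $\bS^2$, apply Proposition~\ref{P-euler-or} upstairs, and translate back through the covering dictionary: $\kappa^{*}=2o+n$ via the orientation-double-cover characterization, $\beta^{*}$ determined by whether each circle lifts to one or two circles, and then parity of $\kappa^{*}=1+\beta^{*}$ combined with the bound $n\le 1$ forces $n=1$ exactly when all circles separate. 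Your route buys a cleaner logical structure: existence and uniqueness of the non-orientable domain, and the value of $\kappa$, fall out mechanically from the sphere formula, whereas the paper's Case~2 counts would strictly speaking need an induction over nested circles (or the M\"{o}bius-strip formula itself) to justify; the paper's route buys an explicit geometric identification of the non-orientable domain. The classification facts you invoke (separating $\Leftrightarrow$ null-homotopic $\Leftrightarrow$ lifts to two disjoint circles; the complement of a non-separating circle is a disk) are exactly the ones the paper establishes in its appendix on circles in $\rP$, so citing them is legitimate. One small repair: in your $n\le 1$ argument, ``contradicting the previous paragraph'' should be restated as the general fact that two \emph{disjoint} simple closed curves in $\rP$ cannot both be non-separating (your Jordan-curve argument applies verbatim to the M\"{o}bius cores, which are not boundary circles of $\cD$). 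It is worth noting that your strategy mirrors the covering-space argument the authors themselves sketch for $M_1$ via the covering cylinder, parity trick included, rather than their proof of this lemma.
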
%

\pf We examine both cases. \smallskip

\emph{Case~1. Assume that $\gamma_1$ does not separate.} In this case, $\gamma_1$ lifts to a unique circle $\gammat_1$ in $\bS^2$, $\bS^2 \setminus \gammat_1$ has two connected components $\Omega_1, \Omega_2$ which are both homeomorphic to a disk. The circle $\gammat_1$ is invariant under the antipodal map $a$, and $\Omega_1, \Omega_2$ are exchanged by $a$. This means that $\rP \setminus \gamma_1$ has only one connected component which is homeomorphic to a disk. Since the $\gamma_j$, $2 \le j \le \ell$ are disjoint from $\gamma_1$, they are contained in this disk, and each one lifts to two disjoint circles exchanged by $a$. In this case, we reduce the computation to the Euclidean case: we have $I(\cD) = 0$, $\sigma(\cD) = 0$, $\beta(\cD) = \kappa(\cD) = \ell$, and $\delta(\cD,\rP)=0$.\medskip

\emph{Case~2. Assume that the $\gamma_j$'s all  separate.} Look at one of them, say $\gamma_1$. It lifts to a pair of disjoint circles $\gammat_{1,1}$ and
$\gammat_{1,2}$ in $\bS^2$ and we have,
\begin{equation*}
\bS^2 \setminus \left( \gammat_{1,1} \bigsqcup \gammat_{1,2}\right) = \Omega_1 \bigsqcup \Omega_2 \bigsqcup A,
\end{equation*}
where $\Omega_1$ and $\Omega_2$ are homeomorphic to disks with boundary respectively $\gammat_{1,1}$ and $\gammat_{1,2}$, $A$ is an annulus with boundary $\gammat_{1,1} \bigsqcup \gammat_{1,2}$. Furthermore, the annulus $A$ is invariant under the antipodal map $a$, and $a(\Omega_1) = \Omega_2$.
For $2 \le j \le \ell$, the circle $\gamma_j$ lifts to two disjoint circles $\gammat_{j,1}$ and $\gammat_{j,2}$ in $\bS^2$ which are exchanged by $a$. Assume that $p$ circles lift in $\Omega_1 \cup \Omega_2$, call them $\gamma_1, \ldots, \gamma_p$, and that $q$ circles lift in $A$, call them $\gamma_{p+1}, \ldots, \gamma_{p+q}$, with $p+q = \ell$. Looking back at $\rP$, we see that $\gamma_1$ separates $\rP$ into a (set homeomorphic to a) disk with boundary $\gamma_1$, containing $(p-1)$ circles $\gamma_2,\ldots, \gamma_p$, and a M\"{o}bius strip containing $q$ circles, $\gamma_{p+1},\ldots, \gamma_{p+q}$. Then, $I(\cD,\rP) = 1$; $\beta(\cD,\rP) = 1 + p + q$,
$\sigma(\cD,\rP) = 0$, and $\kappa(\cD,\rP) = 2 + p + q$. It follows that $\delta(\cD,\rP)=0$. The lemma is proved. \hfill \qed \medskip

\begin{remark}\label{R-mtop-2} At least when $\cS(\cD) = \emptyset$, we have either $I(\cD,\rP) = 0$, or $I(\cD,\rP) = 1$.
\end{remark}%

\begin{lemma}\label{L-mtop-8}
Let $\cD = \set{D_j}_{j=1}^k$ be a regular partition of $M_1$ such that $\cS{\cD} = \emptyset$. Then $\delta(\cD,M_1)=0$.
\end{lemma}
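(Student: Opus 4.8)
Since $\cS(\cD)=\emptyset$, there are no boundary singular points, so $\bD$ meets $\partial M_1$ nowhere and is a disjoint union of simple closed curves $\gamma_1,\dots,\gamma_\ell$ lying in the interior of $M_1$; consequently $\sigma(\cD,M_1)=0$ and $\beta(\cD,M_1)=\ell$. The plan is to mirror the proof of Lemma~\ref{L-mtop-4}, replacing the double cover $\bS^2\to\rP$ by the orientation double cover $\pi:C_1\to M_1$, where $C_1$ is the cylinder and $a$ the free deck involution (which swaps the two boundary circles of $C_1$, because $\partial M_1$ is orientation-preserving). I would lift $\cD$ to the partition $\cD^{*}=\pi^{-1}(\cD)$ of the annulus $C_1$ and read off $\delta$ from the Euclidean Euler formula of Proposition~\ref{P-euler-or}.

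First I would classify the curves. On $M_1$ a simple closed curve is one-sided exactly when it is non-separating (the homology class of the core), and two-sided exactly when it separates. A two-sided $\gamma_j$ lifts to two disjoint circles swapped by $a$, while a one-sided $\gamma_j$ lifts to a single $a$-invariant circle; the latter cannot bound a disk in $C_1$ (otherwise $a$ would restrict to a homeomorphism of that disk and have a fixed point by Brouwer, contradicting freeness), so it is core-parallel in the annulus. Writing $s$ for the number of one-sided curves and $t$ for the number of two-sided ones ($s+t=\ell$), the set $\bD^{*}$ is a disjoint union of $s+2t$ circles in the interior of $C_1$, whence $\sigma(\cD^{*},C_1)=0$ and $\beta(\cD^{*},C_1)=s+2t$. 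Applying Proposition~\ref{P-euler-or} on the annulus gives $\kappa(\cD^{*},C_1)=1+s+2t$.

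Next I would relate the two covers. By Proposition~\ref{P-cyl-2} an orientable domain has two preimage components while the non-orientable one (there is at most one, by Lemma~\ref{L-cyl-2}) has a connected preimage, so $\kappa(\cD^{*},C_1)=2\kappa(\cD,M_1)-\omega(\cD,M_1)$. Combining this with the Euclidean count yields
\[
2\kappa(\cD,M_1)-\omega(\cD,M_1)=1+s+2t .
\]
Substituting $\beta(\cD,M_1)=s+t$ and $\sigma(\cD,M_1)=0$ into $\delta=\omega+\beta+\sigma-\kappa$ gives $\delta(\cD,M_1)=\tfrac12\bigl(\omega(\cD,M_1)+s-1\bigr)$, so it remains to prove that $\omega(\cD,M_1)+s=1$.

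The crux --- and the step I expect to be the main obstacle --- is exactly this identity $\omega+s=1$, which encodes how the single unit of non-orientability of $M_1$ is carried either by a one-sided cut or by a non-orientable domain, but never by both and never by neither. I would establish it in two moves. The displayed relation forces $1+s+\omega$ to be even, i.e. $\omega+s$ is odd. It therefore suffices to show $s\le 1$: two disjoint one-sided curves would lift to two disjoint $a$-invariant core-parallel circles in the annulus, which are linearly ordered and whose order is reversed by $a$ (since $a$ interchanges the boundary circles), so at most one can be $a$-invariant --- a contradiction; equivalently, two disjoint non-separating curves would, by Lemma~\ref{L-pfnor-8}, fail to disconnect $M_1$, contradicting genus one. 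With $s\le1$ and $\omega,s\in\{0,1\}$, the oddness of $\omega+s$ pins down $\omega+s=1$, hence $\delta(\cD,M_1)=0$.
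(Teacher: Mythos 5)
Your argument is correct, but it takes a genuinely different route from the paper's. The paper proves this lemma by capping off the boundary: $\rP$ is realized as $M_1$ with a disk $D_0$ attached along $\partial M_1$, the partition is enlarged by the extra domain $D_0$ (so $\kappa$ and $\beta$ each increase by $1$, while $\omega$ and $\sigma$ are unchanged), and the conclusion is read off from the companion lemma for partitions of $\rP$ with empty singular set, which is itself proved by passing to the antipodal double cover $\bS^2 \to \rP$ and distinguishing the cases where some circle of $\bD$ is non-separating or all circles separate. You instead pass to the orientation double cover $C_1 \to M_1$, apply Proposition~\ref{P-euler-or} on the annulus, and close with the parity identity $\omega + s = 1$; this parity trick is what replaces the paper's case distinction, and your counts ($\kappa^{*} = 2\kappa - \omega$, $\beta^{*} = s + 2t$, $\sigma^{*} = 0$) in fact give, in the empty-singular-set case, a rigorous proof of the relations ($\beta^{*} = 2\beta$ when $\omega = 1$, $\beta^{*} = 2\beta - 1$ when $\omega = 0$) that the paper's draft section on the covering cylinder states with only a tentative proof. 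Two points deserve tightening, though neither is a gap in substance. First, the two covering facts you cite (a domain is orientable iff its preimage has two components; at most one domain is non-orientable) appear in the source only in that excluded draft section, and only for nodal partitions; for arbitrary open domains the first holds because the orientation double cover of $M_1$ restricts to the orientation double cover of every open subset, and the second follows from your own $s \le 1$ argument applied to one-sided simple closed curves chosen inside the non-orientable domains (a non-orientable domain contains an embedded M\"{o}bius band, whose core is one-sided in $M_1$ as well). Second, in the ``equivalently'' aside, Lemma~\ref{L-pfnor-8} can be chained over the two curves only because a one-sided curve has a connected punctured regular neighborhood (take $D$ there to be an open M\"{o}bius-band neighborhood of the curve); mere non-separation of each curve in $M_1$ would not by itself justify the second application, since it would require knowing that $\gamma_2$ does not separate $M_1 \sm \gamma_1$.
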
%

\pf We can view $M_1$ as embedded in $\rP$. Indeed,we can for example consider the map $F: [-\pi,\pi]\times [0,2\pi] \to \R^3$, given by
\begin{equation*}
(\theta,\phi) \mapsto \left( \cos(\frac{\theta}{2}) \cos\phi, \cos(\frac{\theta}{2}) \sin\phi, \sin(\frac{\theta}{2})\right).
\end{equation*}
Then, $F(-\theta,\phi+\pi) = - F(\theta,\phi)$, and the M\"{o}bius strip is homeomorphic to  the image $F\left( (-\frac{\pi}{2},\frac{\pi}{2})\times [0,\pi] \right)$. Then, $\rP$ is $M_1$ with a disk $D_0$ attached along $\partial M_1$. We obtain a partition $\cD_0 = \set{D_j}_{j=0}^k$ of $\rP$. We clearly have $\kappa(\cD_0,\rP) = \kappa(\cD,M_1) + 1$, and $I(\cD_0,\rP) = I(\cD,M_1)$. Since the partition is regular, $\sigma(\cD_0,\rP) = \sigma(\cD,M_1) = 0$. Finally $\beta(\cD_0,\rP) = \beta(\cD,M_1) + 1$. Applying Lemma~\ref{L-mtop-4}, we conclude that $\delta(\cD,M_1) = \delta(\cD_0,\rP) = 0$. \hfill\qed

\begin{remarks}\label{R-mtop-4}~\smallskip

(i) The way of counting the contribution of the singular points on the boundary, and in the interior also gives $\sigma(\cD_0,\rP) = \sigma(\cD,M_1)$ in the general case, and we should have $\delta(\cD,M_1) = 0$, provided we can prove the general formula for $\rP$.\smallskip

(ii) Even if we only work with nodal partitions, this method of proof requires that we can prove the formula of general partitions on $\rP$, indeed the partition $\cD_0$ is no longer a nodal partition.
\end{remarks}%
\end{projective}%%version 

\begin{append-A}%version 
\appendix
\section{Complement of a circle in the real projective plane $\rP$}\label{S-mtop1}

Let $a : \bS^2 \to \bS^2$ be the antipodal map, for all $x \in \bS^2, a(x) = -x$. Let $\pi : \bS^2 \to \rP$ be the projection map from the sphere onto the real projective plane.

\begin{lemma}\label{L-mtop1-2}
Let $\Gamma$ be a circle in $\rP$, i.e., a simple closed curve, $\Gamma = \gamma([0,1])$, where $\gamma : \R \to \rP$ is $1$-periodic, continuous, and injective on $[0,1)$. Let $y_1, y_2$ be the inverse images of $x=\gamma(0) = \gamma(1)$ under the map $\pi$. For $i \in \set{1,2}$, let $\gammat_i$ be the lifting of the map $\gamma$, with $\gammat_i(0)=y_i$, and let $\Gammat_i = \gammat_i(\R)$. Then,
\begin{enumerate}
  \item either $\Gammat_1$ and $\Gammat_2$ coincide, i.e., $\pi^{-1}(\Gamma)$ is connected,
  \item or $\Gammat_1$ and $\Gammat_2$ are disjoint circles, i.e., $\pi^{-1}(\Gamma)$ has two connected components.
\end{enumerate}
In the first case, the circle $\Gammat_1 = \Gammat_2$ is invariant under the antipodal map. In the second case, the circles $\Gammat_1$ and $\Gammat_2$ are exchanged by the antipodal map.
\end{lemma}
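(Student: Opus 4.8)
The plan is to treat this as a standard monodromy computation for the orientation double cover $\pi : \bS^2 \to \rP$, whose group of deck transformations is $\set{\id, a}$, the antipodal map $a$ being fixed-point free, and whose fibers are exactly the antipodal pairs; in particular $y_2 = a(y_1)$. Since $\R$ is simply connected (hence any continuous map to $\rP$ lifts), each $\gammat_i$ exists and is the unique continuous lift of $\gamma$ with $\gammat_i(0) = y_i$. The first thing I would record is the symmetry between the two lifts: because $a$ is a deck transformation, $a \circ \gammat_1$ is again a lift of $\gamma = \pi \circ \gammat_1$, and it starts at $a(y_1) = y_2$; by uniqueness of lifts, $a \circ \gammat_1 = \gammat_2$. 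Hence $\Gammat_2 = a(\Gammat_1)$ and $\pi^{-1}(\Gamma) = \Gammat_1 \cup \Gammat_2$, so the invariance/exchange assertions will follow automatically once the dichotomy is settled.

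Next I would run the monodromy argument. Since $\pi(\gammat_1(1)) = \gamma(1) = \gamma(0) = x$, we have $\gammat_1(1) \in \set{y_1, y_2}$, and I would split accordingly. Consider the shifted map $t \mapsto \gammat_1(t+1)$: by $1$-periodicity of $\gamma$ it is again a lift of $\gamma$, so by uniqueness it is determined by its value at $0$, namely $\gammat_1(1)$. If $\gammat_1(1) = y_1$, this forces $\gammat_1(t+1) = \gammat_1(t)$, so $\gammat_1$ is $1$-periodic and $\Gammat_1$ is a closed curve, simple because $\pi \circ \gammat_1 = \gamma$ is injective on $[0,1)$. If instead $\gammat_1(1) = y_2$, the same uniqueness gives $\gammat_1(t+1) = \gammat_2(t) = a(\gammat_1(t))$, whence $\gammat_1(t+2) = a(a(\gammat_1(t))) = \gammat_1(t)$; thus $\gammat_1$ is $2$-periodic, $\Gammat_1 = \gammat_1(\R) = \gammat_2(\R) = \Gammat_2$, and the two lifts describe a single circle.

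The step I expect to carry the real content is the repeated use of the fixed-point-freeness of $a$ to turn coincidences $\gamma(s) = \gamma(t)$ into clean geometric conclusions. In the $1$-periodic case, to see that $\Gammat_1$ and $\Gammat_2 = a(\Gammat_1)$ are disjoint I would suppose $\gammat_1(s) = a(\gammat_1(t))$; projecting by $\pi$ gives $\gamma(s) = \gamma(t)$, hence $s \equiv t \pmod 1$, and then $\gammat_1(s) = a(\gammat_1(s))$ exhibits a fixed point of $a$, which is impossible. The same device shows $\gammat_1$ is injective on $[0,2)$ in the coincident case, so that $\Gammat_1 = \Gammat_2$ is a genuinely embedded circle. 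Finally, the relation $\Gammat_2 = a(\Gammat_1)$ established at the outset gives ``invariant under $a$'' in the coincident case and ``exchanged by $a$'' in the disjoint case, completing the proof. Optionally I would remark that the two alternatives correspond precisely to whether $\gamma$ is trivial or nontrivial in $\pi_1(\rP) \cong \Z/2$, i.e.\ whether $\Gamma$ separates $\rP$ or not, which ties the statement back to the case analysis of Lemma~\ref{L-mtop-4}.
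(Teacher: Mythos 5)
Your proposal is correct and follows essentially the same route as the paper's proof: a case split on whether $\widetilde{\gamma}_1(1)$ equals $y_1$ or $y_2$, with uniqueness of lifts driving both the dichotomy and the invariance/exchange statements. The paper's version is much terser (it leaves implicit the deck-transformation identity $a \circ \widetilde{\gamma}_1 = \widetilde{\gamma}_2$ and the fixed-point-freeness argument for disjointness, both of which you spell out), but there is no difference in method.
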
%

\pf Clearly, for $i \in \set{1,2}$, the curve $\gammat_i$ is injective on $[0,1)$, and $\gammat_i(1)$ is either $y_1$ or $y_2$. If $\gammat_i(1) \not = y_i$, we are in the first case. Indeed, the uniqueness of the lifting implies that the curves are identical up to a translation of the parameter by $1$. If $\gammat_i(1) = y_i$, we are in the second case, and the injectivity of $\gammat_i$ on $[0,1)$ implies that $\Gammat_1$ and $\Gammat_2$ are disjoint.  The last assertion follows from the uniqueness of the lifting, and the fact that $a(y_1)=y_2$ and vice-versa.\hfill \qed

\begin{lemma}\label{L-mtop1-4}
Let $\Gamma = \gamma([0,1]) \subset \rP$ be a simple closed curve. Assume that $\pi^{-1}(\Gamma) = \Gammat_1 \bigsqcup \Gammat_2$ is the union of two disjoint simple closed curves in $\bS^2$. Then, $\rP \sm \Gamma$ has two connected components. One component is homeomorphic to a disk, the other is homeomorphic to a M\"{o}bius strip, and they have $\Gamma$ as common boundary.
\end{lemma}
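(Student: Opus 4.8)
The plan is to lift everything to $\bS^2$, apply the Jordan--Schoenflies theorem there, and then push the picture back down through the double cover $\pi$. First I would invoke Lemma~\ref{L-mtop1-2}: under the standing hypothesis that $\pi^{-1}(\Gamma)=\Gammat_1\sqcup\Gammat_2$ has two components, the circles $\Gammat_1,\Gammat_2$ are disjoint and are exchanged by the antipodal map $a$. Two disjoint simple closed curves always cut the sphere into exactly three pieces: $\Gammat_1$ bounds two open disks, $\Gammat_2$ lies inside one of them, and I obtain
\begin{equation*}
\bS^2\sm(\Gammat_1\cup\Gammat_2)=\Omega_1\sqcup A\sqcup\Omega_2,
\end{equation*}
where $\Omega_1,\Omega_2$ are open disks with $\partial\Omega_1=\Gammat_1$, $\partial\Omega_2=\Gammat_2$, and $A$ is an open annulus with $\partial A=\Gammat_1\sqcup\Gammat_2$.

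Next I would track the antipodal map. Since $a$ is a fixed-point-free homeomorphism with $a(\Gammat_1)=\Gammat_2$, it permutes the three regions; as $\Omega_1$ is the unique region bounded by $\Gammat_1$ alone and $\Omega_2$ the unique one bounded by $\Gammat_2$ alone, $a$ must swap $\Omega_1\leftrightarrow\Omega_2$ and fix the annulus $A$ (the only region touching both curves) as a set. Projecting, $\rP\sm\Gamma=\pi(\Omega_1\sqcup A\sqcup\Omega_2)$ splits as $U\sqcup V$ with $U:=\pi(\Omega_1)=\pi(\Omega_2)$ and $V:=\pi(A)$; these are disjoint, connected and open because $\Omega_1\cup\Omega_2$ and $A$ are disjoint $a$-invariant sets. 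Passing to closures in $\rP$, the disjointness $\overline{\Omega_1}\cap\overline{\Omega_2}=\emptyset$ forces $\pi$ to be injective on the closed disk $\overline{\Omega_1}$, so $\overline U=\pi(\overline{\Omega_1})$ is a closed disk with boundary $\pi(\Gammat_1)=\Gamma$. This settles the disk component and shows $\rP\sm\Gamma$ has exactly two components.

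For the remaining component I would identify $\overline V=\pi(\overline A)$. Since $a$ maps the closed annulus $\overline A$ onto itself freely, $\pi|_{\overline A}$ is a genuine two-sheeted covering onto $\overline V$, a compact connected surface whose boundary is $\pi(\Gammat_1\cup\Gammat_2)=\Gamma$, a single circle. Multiplicativity of the Euler characteristic under finite coverings gives $\chi(\overline V)=\frac{1}{2}\chi(\overline A)=0$. Now I appeal to the classification of compact surfaces with boundary: among surfaces with a single boundary circle, an orientable one has $\chi=1-2g$ (always odd), while a non-orientable one has $\chi=1-k$ with $k\ge 1$ crosscaps; hence $\chi=0$ forces $k=1$, i.e. $\overline V$ is a M\"obius strip. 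Finally $\partial\overline U=\partial\overline V=\Gamma$ gives the common boundary.

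I expect the main obstacle to be point-set and regularity bookkeeping rather than anything deep: $\Gamma$ is only assumed to be a \emph{continuous} simple closed curve, so the clean three-region decomposition of $\bS^2$ and the claim that the pieces are genuine topological disks and an annulus rest on the Schoenflies theorem, and one must treat the quotient $\overline A/a$ as a topological (not necessarily smooth) surface so that both the classification theorem and the Euler-characteristic count apply. Everything else — the swap of $\Omega_1,\Omega_2$, the freeness of $a$, and the equality $\chi(\overline V)=0$ — is routine once the decomposition is in hand.
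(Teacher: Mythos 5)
Your argument is correct, and it shares the paper's overall skeleton: the lifting dichotomy, two applications of Jordan--Sch\"{o}nflies to obtain $\bS^2 \sm (\Gammat_1 \cup \Gammat_2) = \Omega_1 \sqcup A \sqcup \Omega_2$, an analysis of how the antipodal map $a$ permutes these pieces, and then projection to $\rP$. However, the two delicate steps are handled by genuinely different means, and in both cases your route is tighter. For the claim that $a$ swaps $\Omega_1 \leftrightarrow \Omega_2$ and preserves $A$, the paper goes through four subclaims built on path-crossing arguments and an appeal to Brouwer's fixed point theorem (to exclude $a(\Omega_1) \subset \overline{\Omega_1}$); you instead observe that $a$ restricts to a homeomorphism of the complement of the invariant set $\Gammat_1 \cup \Gammat_2$, hence permutes its three components, and that frontiers are preserved, so $\partial\bigl(a(\Omega_1)\bigr) = a(\Gammat_1) = \Gammat_2$ forces $a(\Omega_1) = \Omega_2$ since $\Omega_2$ is the unique component with frontier $\Gammat_2$. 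This is shorter and needs no fixed-point theory. In the other direction, the paper ends by writing $\rP = \pi(\Omega_1) \sqcup \pi(A) \sqcup \Gamma$ and declaring that ``the lemma follows,'' leaving implicit why the quotient of the annulus by the free involution $a$ is a M\"{o}bius strip; your Euler-characteristic argument ($\chi(\overline{V}) = \tfrac{1}{2}\,\chi(\overline{A}) = 0$ for the two-sheeted covering, and among compact surfaces with a single boundary circle only the M\"{o}bius strip has $\chi = 0$, orientable ones having odd characteristic $1-2g$) supplies exactly the missing justification, resting only on the classification theorem the paper already uses elsewhere. The point-set details you flag are also the right ones: $\overline{A}$ is $\pi$-saturated, so $\pi|_{\overline{A}}$ is a genuine double covering onto its image, and injectivity of $\pi$ on $\overline{\Omega_1}$ follows from $a(\overline{\Omega_1}) = \overline{\Omega_2}$ together with $\overline{\Omega_1} \cap \overline{\Omega_2} = \emptyset$. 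Nothing essential is missing.
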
%

\proof The assumption of the lemma means that we are in Case~(2) of Lemma~\ref{L-mtop1-2}. We use the same notation as in the proof of this lemma.\medskip

Since $\Gammat_1$ is a simple closed curve in $\bS^2$, the Jordan-Sch\"{o}nflies theorem implies that $\bS^2 \sm \Gammat_1$ has two connected components with common boundary, both homeomorphic to a disk (with homeomorphisms extending to the boundaries). Since $\Gammat_1$ and $\Gammat_2$ are disjoint, $\Gammat_2$ is entirely contained in one of these components. Call $\Omega_1$ the component which does not contain $\Gammat_2$, and $\Omega_1'$ the component which contains $\Gammat_2$. Since $\Omega_1'$ is a disk, we can again apply the Jordan-Sch\"{o}nflies theorem. The curve $\Gammat_2$ divides $\Omega_1'$ into two connected components, one component $\Omega_2$ homeomorphic to a disk, the other $A$, homeomorphic to an annulus, and whose boundary has two connected components, $\Gammat_1 = \partial \Omega_1$, and $\Gammat_2 = \partial \Omega_2$.\medskip

\begin{claim}\label{Cl-mtop1-2}
With the previous notation,
\begin{enumerate}[(i)]
  \item $a(\Omega_1) = \Omega_2$,
  \item $a(A) = A$.
\end{enumerate}
\end{claim}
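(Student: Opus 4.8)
The plan is to exploit that the antipodal map $a$ is a fixed-point-free, involutive homeomorphism of $\bS^2$ which, since we are in Case~(2) of Lemma~\ref{L-mtop1-2}, exchanges the two lifted curves: $a(\Gammat_1) = \Gammat_2$ and $a(\Gammat_2) = \Gammat_1$. In particular $a$ preserves the set $\Gammat_1 \sqcup \Gammat_2$, hence it preserves the complement $\bS^2 \sm (\Gammat_1 \sqcup \Gammat_2) = \Omega_1 \sqcup \Omega_2 \sqcup A$. Being a homeomorphism, $a$ carries connected components onto connected components, so it induces a permutation of the three sets $\set{\Omega_1, \Omega_2, A}$. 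Everything reduces to identifying this permutation.

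The key step is to pin it down using boundaries. Since $a$ is a homeomorphism, $\partial(a(U)) = a(\partial U)$ for every subset $U$. Here $\partial \Omega_1 = \Gammat_1$, $\partial \Omega_2 = \Gammat_2$, and $\partial A = \Gammat_1 \sqcup \Gammat_2$, and these three boundary sets are pairwise distinct. From $\partial(a(\Omega_1)) = a(\Gammat_1) = \Gammat_2$ one sees that $a(\Omega_1)$ must be the unique component whose boundary is exactly $\Gammat_2$, namely $\Omega_2$. This already gives assertion~(i), $a(\Omega_1) = \Omega_2$. I would note in passing that $a$ cannot fix $\Omega_1$, since that would force $a(\Gammat_1) = \Gammat_1$, contradicting $a(\Gammat_1) = \Gammat_2$ together with $\Gammat_1 \cap \Gammat_2 = \emptyset$.

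For assertion~(ii), I would invoke that $a$ is an involution: from $a(\Omega_1) = \Omega_2$ we get $a(\Omega_2) = a(a(\Omega_1)) = \Omega_1$, so the induced permutation swaps $\Omega_1$ and $\Omega_2$; the remaining component is therefore fixed, whence $a(A) = A$. Equivalently, one can argue directly by boundaries: $\partial(a(A)) = a(\Gammat_1 \sqcup \Gammat_2) = \Gammat_1 \sqcup \Gammat_2$ identifies $a(A)$ as the only component having a two-circle boundary, which is $A$.

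I do not expect a serious obstacle here. The only points needing a little care are the two structural facts used above, namely that a homeomorphism of $\bS^2$ fixing $\Gammat_1 \sqcup \Gammat_2$ setwise permutes the connected components of the complement, and that the three boundary sets $\Gammat_1$, $\Gammat_2$, $\Gammat_1 \sqcup \Gammat_2$ are distinct enough to single out each component. Both follow immediately from $a$ being a fixed-point-free homeomorphism exchanging $\Gammat_1$ and $\Gammat_2$, so the claim is essentially a bookkeeping argument once this framework is in place.
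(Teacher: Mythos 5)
Your proof is correct, but it follows a genuinely different and in fact more economical route than the paper's. The paper proceeds by a case analysis (Subclaims~1--4): it uses path-connectedness arguments (a path in $\Omega_1$ whose image under $a$ would have to cross $\Gammat_2$, forcing a point of $\Omega_1$ onto $\Gammat_1$) together with Brouwer's fixed point theorem, which is invoked precisely to exclude the possibility $a(\Omega_1) \subset \overline{\Omega_1}$, and only then assembles the cases to get $a(\Omega_1) = \Omega_2$ and deduce (ii). You instead observe that, since Case~(2) of Lemma~\ref{L-mtop1-2} gives $a(\Gammat_1) = \Gammat_2$ and $a(\Gammat_2) = \Gammat_1$, the homeomorphism $a$ preserves $\Gammat_1 \sqcup \Gammat_2$ and therefore permutes the three connected components $\Omega_1, \Omega_2, A$ of its complement; the boundary invariant ($\partial \Omega_1 = \Gammat_1$, $\partial \Omega_2 = \Gammat_2$, $\partial A = \Gammat_1 \sqcup \Gammat_2$, pairwise distinct sets) then pins down the permutation, since $\partial(a(\Omega_1)) = a(\Gammat_1) = \Gammat_2$ forces $a(\Omega_1) = \Omega_2$, and involutivity forces $A$ to be fixed. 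This buys you two things: Brouwer's theorem disappears entirely (if $a(\Omega_1)$ were $\Omega_1$, its boundary would be $\Gammat_1$, already a contradiction), and the path-crossing subclaims are replaced by the single standard fact that a self-homeomorphism permutes connected components. One small remark: the fixed-point-freeness of $a$, which you cite at the beginning and end, is never actually needed in your argument -- everything follows from $a$ being an involutive homeomorphism exchanging $\Gammat_1$ and $\Gammat_2$; it is the paper's Brouwer step that genuinely uses fixed-point-freeness, and your argument dispenses with it.
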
%

\emph{Proof of Claim~\ref{Cl-mtop1-2}}. \smallskip

\noid \emph{Subclaim~1.}~ Either $a(\Omega_1) \subset \bS^2\sm \overline{\Omega_2}$ or $a(\Omega_1) \subset \Omega_2$. Indeed, there would otherwise exist $x, y \in \Omega_1$ such that $a(x) \in \bS^2 \sm \Omega_2$ and $a(y) \in \Omega_2$. One can choose a curve $c:[0,1] \to \Omega_1$ such that $c(0) = x$ and $c(1) = y$. The curve $t \mapsto a(c(t))$ going from $a(x)$ to $a(y)$ must cross the boundary: there exists some $t_0 \in (0,1)$ such that $a(c(t_0)) \in \Gammat_2$ and hence $c(t_0) \in \Gammat_1$, a contradiction since $c$ is entirely contained in $\Gammat_1$. \hfill $\checkmark$ \smallskip

\noid \emph{Subclaim~2.}~ Either $a(\Omega_2) \subset \bS^2\sm \Omega_1$ or $a(\Omega_2) \subset \Omega_1$. Analogous to Subclaim~1.\smallskip

\noid \emph{Subclaim~3.}~ Assume that $a(\Omega_1) \subset \bS^2\sm \Omega_2$. Then, $a(\Omega_1) \subset A$. Indeed, by applying Brouwer's theorem to the disk $\Omega_1$, we see that $a(\Omega_1) \not \subset \overline{\Omega_1}$. Since $a(\Omega_1) \subset \bS^2\sm \Omega_2$, another possibility would be that there exist $x,y \in \Omega_1$ such that $a(x) \in \Omega_1$ and $a(y) \in A$. We could again choose a path from $x$ to $y$ in $\Omega_1$, and conclude that there exists some $t_0 \in [0,1]$ such that $a(c(t_0)) \in \Gammat_1$, so that $c(t_0) \in \Gammat_2$, a contradiction. The remaining possibility is $a(\Omega_1) \subset A$, as claimed.\hfill $\checkmark$ \smallskip

\noid \emph{Subclaim~4.}~ Assume that $a(\Omega_2) \subset \bS^2\sm \Omega_1$. Then, $a(\Omega_2) \subset A$. Analogous to Subclaim~3.\smallskip

\emph{Proof of Claim~\ref{Cl-mtop1-2}, Assertion~(i).} Assume that $a(\Omega_1) \subset \bS^2 \sm \Omega_2$. Then, by Subclaim~3, $a(\Omega_1) \subset A$. Using Subclaims~2 and 4, we see that either $a(\Omega_2) \subset A$, or $a(\Omega_2) \subset \Omega_1$. If $a(\Omega_2) \subset \Omega_1$, then $\Omega_2 \subset a(\Omega_1) \subset A$ which is absurd. It follows that we must have $a(\Omega_1) \subset A$ and $a(\Omega_2) \subset A$. this implies that $\Omega_1, \Omega_2 \subset a(A)$. Since $\bS^2 = \overline{\Omega_1 \cup \Omega_2 \cup A}$, we would have $a(A) = \Omega_1 \cup \Omega_2$, contradicting the fact that $A$ is connected. Considering the various possible cases, we can conclude that $a(\Omega_1) \subset \Omega_2$ and $a(\Omega_2) \subset \Omega_1$, and hence $a(\Omega_1) = \Omega_2$. \smallskip

 \emph{Proof of Claim~\ref{Cl-mtop1-2}, Assertion~(ii).} Follows immediately from Assertion~(i). \hfill $\checkmark$ \medskip

\emph{End of the proof of Lemma~\ref{L-mtop1-4}.}~ From Claim~\ref{Cl-mtop1-2}, we deduce that
$$
\rP = \pi(\Omega_1) \bigsqcup \pi(A) \bigsqcup \Gamma,
$$
and the lemma follows. \hfill \qed

\begin{lemma}\label{L-mtop1-6}
Let $\Gamma = \gamma([0,1]) \subset \rP$ be a simple closed curve. Assume that $\pi^{-1}(\Gamma) = \Gammat$ is a simple closed curve in $\bS^2$. Then, $\rP \setminus \Gamma$ has one connected component, homeo\-morphic to a disk, with boundary $\Gamma$.
\end{lemma}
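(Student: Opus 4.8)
The hypothesis places us in Case~(1) of Lemma~\ref{L-mtop1-2}: the preimage $\pi^{-1}(\Gamma) = \Gammat$ is a single simple closed curve, and it is invariant under the antipodal map $a$. The plan is to analyze the complement of $\Gammat$ in $\bS^2$ and then push the picture down to $\rP$ via $\pi$. First I would apply the Jordan--Sch\"{o}nflies theorem to the simple closed curve $\Gammat \subset \bS^2$: it splits $\bS^2$ into two connected components $\Omega_1$ and $\Omega_2$, each homeomorphic to an open disk, with common boundary $\Gammat$.

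The key step is to show that the antipodal map exchanges the two components, $a(\Omega_1) = \Omega_2$. Since $a$ is a homeomorphism of $\bS^2$ fixing $\Gammat$ setwise, it permutes $\set{\Omega_1,\Omega_2}$; the only alternative to exchange is that $a(\Omega_1) = \Omega_1$ (and $a(\Omega_2)=\Omega_2$). In that case $a$ would map the closed disk $\overline{\Omega_1} = \Omega_1 \cup \Gammat$ homeomorphically onto itself, and Brouwer's fixed point theorem would furnish a fixed point of $a$ in $\overline{\Omega_1}$, contradicting the fact that the antipodal map is fixed-point-free. Hence $a(\Omega_1) = \Omega_2$. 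I expect this Brouwer argument (mirroring Subclaim~3 in the proof of Lemma~\ref{L-mtop1-4}) to be the crux: it is exactly what forces the complement downstairs to be a single disk rather than a pair of regions.

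It remains to transport this to $\rP$. Because antipodal points of $\Omega_1$ lie in $a(\Omega_1) = \Omega_2$, which is disjoint from $\Omega_1$, the restriction $\pi|_{\Omega_1}$ is injective; being the restriction of a covering map to an open set on which it is one-to-one, it is a homeomorphism onto the open set $\pi(\Omega_1)$. Since $a$ exchanges the two components, $\pi(\Omega_1) = \pi(\Omega_2) = \rP \sm \Gamma$, so $\rP \sm \Gamma$ is homeomorphic to the open disk $\Omega_1$; in particular it is connected. Finally, from $\overline{\Omega_1} = \Omega_1 \cup \Gammat$ and $\pi(\Gammat) = \Gamma$ I would read off $\overline{\pi(\Omega_1)} = \pi(\Omega_1)\cup\Gamma$, so that the frontier of the unique component $\rP\sm\Gamma$ is exactly $\Gamma$, completing the proof.
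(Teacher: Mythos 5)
Your proof is correct and takes essentially the same route as the paper's: invoke Case~(1) of Lemma~\ref{L-mtop1-2}, split $\bS^2 \sm \Gammat$ into two disks by Jordan--Sch\"{o}nflies, rule out $a(\Omega_i) = \Omega_i$ via Brouwer's fixed point theorem (so $a$ exchanges the components), and then project $\Omega_1$ homeomorphically onto $\rP \sm \Gamma$. You merely make explicit some details the paper leaves implicit, namely the injectivity of $\pi|_{\Omega_1}$ and the identification of the boundary.
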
%

\pf The assumption means that we are in Case~(1) of Lemma~\ref{L-mtop1-2}. The curve $\Gamma$ lifts to a single simple closed curve $\Gammat$ whose complement in $\bS^2$ has two connected components, $\Omega_1$ and $\Omega_2$, both homeomorphic to the disk, and sharing the same boundary $\Gammat$. The only possibilities are $a(\Omega_i) = \Omega_i$, for $i \in \set{1,2}$, or $a(\Omega_1) = \Omega_2$. The first possibility can be discarded by using Brouwer's theorem, as above. Then, $\rP \sm \Gamma = \pi(\Omega_1)$ is homeomorphic to a disk. \hfill \qed

\begin{remark}\label{R-mtop1-2}
The above statements appear in \cite[p.~286]{BCR1998}, see also \cite{SG1}.
\end{remark}%

\end{append-A}%version 

\vspace{2cm}
\bibliographystyle{plain}

\end{document}